\documentclass[smallextended]{article}       
%
%
\usepackage{graphicx}
\usepackage{subfigure}
\usepackage[T1]{fontenc}
%
\usepackage{amsthm}
\usepackage{graphicx}
\usepackage{amsfonts}
\usepackage{amsmath, xypic}
%
%

\newtheorem{thm}{Theorem}

\newtheorem{lem}{Lemma}
\newtheorem{prop}{Proposition}
\newtheorem{cor}{Corollary}
\newtheorem{exm}{Example}

\newtheorem{rem}{Remark}

\newcommand{\al}{\alpha}

\newcommand{\la}{\lambda}

\newlength{\cellsz}
\newcounter{cellsize}
\newcommand{\setcellsize}[1]{%
  \setcounter{cellsize}{#1}%
  \setlength{\cellsz}{\value{cellsize}\unitlength}}%
\setcellsize{12}%
\newcommand\cellify[1]{\def\thearg{#1}\def\nothing{}%
\hbox to 0pt{{\begin{picture}(\value{cellsize},\value{cellsize})
  \put(0,0){\line(1,0){\value{cellsize}}}
  \put(0,0){\line(0,1){\value{cellsize}}}
  \put(\value{cellsize},0){\line(0,1){\value{cellsize}}}
  \put(0,\value{cellsize}){\line(1,0){\value{cellsize}}} \end{picture}
\hss}}
\vbox to \cellsz{ \vss \hbox to \cellsz{\hss$#1$\hss} \vss}}
\newcommand\tableau[1]{\vcenter{\vbox{\let\\\cr
\baselineskip -16000pt \lineskiplimit 16000pt \lineskip 0pt
\ialign{&\cellify{##}\cr#1\crcr}}}}
\newcommand\tabl[1]{\vtop{\let\\\cr
\baselineskip -16000pt \lineskiplimit 16000pt \lineskip 0pt
\ialign{&\cellify{##}\cr#1\crcr}}}
\newlength{\varcellsz}
\newcounter{varcellsize}
\newcommand{\setvarcellsize}[1]{%
  \setcounter{varcellsize}{#1}%
  \setlength{\varcellsz}{\value{varcellsize}\unitlength}}%
\setvarcellsize{10}%
\newcommand\varcellify[1]{\def\varthearg{#1}\def\varnothing{}%
\hbox to 0pt{{\begin{picture}(\value{varcellsize},\value{varcellsize})
  \put(0,0){\line(1,0){\value{varcellsize}}}
  \put(0,0){\line(0,1){\value{varcellsize}}}
  \put(\value{varcellsize},0){\line(0,1){\value{varcellsize}}}
  \put(0,\value{varcellsize}){\line(1,0){\value{varcellsize}}} \end{picture}
\hss}}
\vbox to \varcellsz{ \vss \hbox to \varcellsz{\hss$#1$\hss} \vss}}
\newcommand\vartableau[1]{\vcenter{\vbox{\let\\\cr
\baselineskip -16000pt \lineskiplimit 16000pt \lineskip 0pt
\ialign{&\varcellify{##}\cr#1\crcr}}}}
\newcommand\vartabl[1]{\vtop{\let\\\cr
\baselineskip -16000pt \lineskiplimit 16000pt \lineskip 0pt
\ialign{&\varcellify{##}\cr#1\crcr}}}

\begin{document}

\title{Polynomial properties of Jack connection coefficients and generalization of a result by D\'enes}
\author{Ekaterina A. Vassilieva}




\maketitle

\begin{abstract}

This article is devoted to the computation of Jack connection coefficients, a generalization of the connection coefficients of two classical commutative subalgebras of the group algebra of the symmetric group: the class algebra and the double coset algebra. The connection coefficients of these two algebraic structures are of significant interest in the study of Schur and zonal polynomials as well as the irreducible characters of the symmetric group and the zonal spherical functions. Furthermore they play an important role in combinatorics as they give the number of factorizations of a permutation into a product of permutations with given cyclic properties and, in some cases, the number of hypermaps embedded in orientable and locally orientable surfaces with specified vertex degree distribution.\\
Usually studied separately, these two families of coefficients share strong similar properties. First (partially) introduced by Goulden and Jackson in 1996, Jack connection coefficients provide a natural unified approach closely related to the theory of Jack polynomials, a family of bases in the ring of symmetric functions indexed by a parameter $\alpha$ that generalizes both Schur (case $\alpha = 1$) and zonal polynomials (case $\alpha = 2$). Jack connection coefficients are also directly linked to Jack characters, a general view of the characters of the symmetric group and the zonal spherical functions. Goulden and Jackson conjectured that these coefficients are polynomials in $\alpha$ with nice combinatorial properties, the so-called {\em Matchings-Jack conjecture}.\\
In this paper, we use the theory of Jack symmetric functions and the Laplace Beltrami operator to show the polynomial properties of Jack connection coefficients in some important cases. We also provide explicit formulations including notably a generalization of a classical formula of Denes for the number of minimal factorizations of a permutation into transpositions.    
\end{abstract}
\section{Introduction}
\label{sec:in}
\subsection{Basic notations}
For any integer $n$ we note $S_n$ the symmetric group on $n$ elements and $\la=(\la_1,\la_2,\ldots,\la_p) \vdash n$ an integer partition of $|\la| = n $ with $\ell(\la)=p$ parts sorted in decreasing order. If $m_i(\la)$ is the number of parts of $\la$ that are equal to $i$, then we may write $\la$ as $[1^{m_1(\la)}\,2^{m_2(\la)}\ldots]$ and define $Aut_{\la}=\prod_i m_i(\la)!$ and $z_\lambda =\prod_i i^{m_i(\lambda)}m_i(\lambda)!$. A partition $\lambda$ is usually represented as a Young diagram of $|\la|$ boxes arranged in $\ell(\la)$ lines so that the $i$-th line contains $\la_i$ boxes.  Given a box $s$ in the diagram of $\lambda$, let $l'(s),l(s),a(s),a'(s)$ be the number of boxes to the north, south, east, west of $s$ respectively. These statistics are called {\bf co-leglength, leglength, armlength, co-armlength} respectively. We note for some parameter $\alpha$:
\begin{align}
h_{\lambda}(\alpha)= \prod_{s\in \lambda} (\alpha a(s) + l(s) + 1), \;\;\;\;\;\;\;  h'_{\lambda}(\alpha)= \prod_{s\in \lambda} (\alpha(1+a(s)) + l(s)).
\end{align}
Moreover, $\la'$ is the conjugate of partition $\la$, for two integer partitions $\la$ and $\mu$, we denote $\la>\mu$ if for all $i\geq 1$, $\la_1 + \la_2 +\ldots+\la_i >\mu_1 + \mu_2 +\ldots+\mu_i $ and we denote $n(\la)$ the quantity
\begin{equation}
\label{eq : n}
n(\la) = \sum_i (i-1)\la_i.
\end{equation}
\subsection{Jack symmetric functions}
\indent Let $\Lambda$ be the ring of symmetric functions. Denote $m_\lambda(x)$ the monomial symmetric function indexed by $\lambda$ on indeterminate $x$, $p_\lambda(x)$ and $s_\lambda(x)$  the power sum and Schur symmetric functions respectively. Whenever the indeterminate is not relevant we shall simply write $m_\la$, $p_\la$ and $s_\la$. Let $\langle \cdot\, ,\cdot \rangle$ be the scalar product on $\Lambda$ such that the power sum symmetric functions verify $\left <p_\la,p_\mu \right> = z_\la\delta_{\la\mu}$ where $\delta_{\la\mu}$ is the Kronecker delta.\\
The Schur symmetric functions $s_\la$ are characterized by the fact that they form an orthogonal basis of $\Lambda$ for $\langle \cdot\, ,\cdot \rangle$ (they form even an orthonormal basis) and the transition matrix between Schur and monomial symmetric functions is upper unitriangular. As shown by Schur, the $s_\la$ are of critical interest in the representation theory of the symmetric group.\\
The {\bf zonal polynomials} $Z_\la$ constitute another important basis of $\Lambda$ directly linked with the theory of the {\bf zonal spherical functions}. Zonal polynomials verify the same properties as the $s_\la$ if the scalar product is replaced by $\langle \cdot,\cdot \rangle_2$ with $\langle p_\la,p_\mu \rangle_2 = 2^{\ell(\la)}z_\la\delta_{\la\mu}$.\\
More generaly, using an additional parameter $\alpha$, Henry Jack \cite{HJ} introduced the basis of {\bf Jack symmetric functions} $J^\alpha_\la$, that can be characterized as the set of symmetric functions verifying:
\begin{itemize}
\item[(a)] The $J^\alpha_\la$ are orthogonal for the alternative scalar product $\langle \cdot\, ,\cdot \rangle_\alpha$ that verifies:
\begin{equation}
\left<p_\la,p_\mu \right>_\alpha = \alpha^{\ell(\la)}z_\la\delta_{\la\mu}.
\end{equation}
\item[(b)] The transition matrix between the $J^\alpha_\la$ and the monomial symmetric functions is upper triangular and the coefficient in $m_\la$ of the expansion of $J^\alpha_\la$ in the monomial basis is equal to $h_\la(\alpha)$. Formally it means that the $J^\alpha_\la$ may be expressed with the help of some scalar coefficients $u_{\la\mu}^\alpha$ as: 
\begin{equation}
J^\alpha_\la = h_\la(\alpha)m_\la + \sum_{\mu < \la}u_{\la\mu}^\alpha m_\mu.
\end{equation}
\end{itemize}
According to the above definition, $J^1_\la$ is the normalized Schur symmetric function $h_\la(1)s_\la$ and $J^2_\la$ is the zonal polynomial $Z_\la$.
\subsection{Jack connection coefficients -- main results}
In \cite{GJ96} Goulden and Jackson studied the series
\begin{equation}
\Phi(x,y,z,\alpha) = \sum_{\beta \vdash n}\frac{J^{\al}_\beta(x)J^{\al}_\beta(y)J^{\al}_\beta(z)}{\langle J^{\al}_\beta,J^{\al}_\beta\rangle_{\al}}.
\end{equation}
More precisely, they investigate the {\bf connection series} i.e. the coefficients $c_{\mu\nu}^\la(b)$ in the power sum expansion of $\Phi$ defined by
\begin{equation}
\sum_{\la, \mu, \nu \vdash n}\frac{c_{\mu\nu}^\la(b)}{(1+b)^{\ell(\la)}}\frac{|C_\la|}{n!}p_\la(x)p_\mu(y)p_\nu(z) = \sum_{\beta \vdash n}\frac{J^{1+b}_\beta(x)J^{1+b}_\beta(y)J^{1+b}_\beta(z)}{\langle J^{1+b}_\beta,J^{1+b}_\beta\rangle_{1+b}}
\end{equation} 
They conjecture that the $c_{\mu\nu}^\la(b)$ are polynomials in $b$ with positive integer coefficients and of degree at most $n-\min\{\ell(\mu),\ell(\nu)\}$. They also conjecture that the coefficients $c_{\mu\nu}^\la(b)$ can be written in the form
\begin{equation}
c_{\mu\nu}^\la(b) = \sum_{\delta}b^{wt_{\la}(\delta)}
\end{equation}
where the summation is over a subset of matchings (fixed point free involutions in $S_{2n}$) and $wt_{\la}$ is a combinatorial statistic correlated with the "nonbipartiteness" of matchings. They show the conjecture in the cases $\la =[1^n]$ and $\la = [1^{n-2}2^1]$.\\
As shown in Section \ref{sec : mot}, aside their importance in the study of Jack symmetric functions (and therefore Schur and zonal polynomials), there are several motivations to study this kind of coefficients. They provide a general approach to the study of classical connection coefficients in the group algebra of the symmetric group. These coefficients are also strongly related to the theory of Jack characters that generalize both the irreducible characters of the symmetric group and the zonal spherical functions. Finally (see e.g. \cite{H92}), they are of notable interest to statisticians as they arise in the study of asymmetric random walks on the symmetric group.\\  
In this paper we develop further this unified framework to study connection coefficients. We define the numbers $a_{\la^1,\la^2,\ldots,\la^s}$ for integer $s$ greater or equal to $1$ and $\la^i \vdash n$ for $1\leq i \leq s$.
\begin{equation}
\label{eq : df}
 a_{\la^1,\la^2,\ldots,\la^s}(\al) =\left [\prod_{i} p_{\la^i}(x^i)\right ]\sum_{\beta \vdash n}\frac{1}{\langle J^{\al}_\beta,J^{\al}_\beta\rangle_{\al}}\prod_{i}J_\beta^\alpha(x^i)
\end{equation}
We use the terminology of {\bf Jack connection coefficients} for these numbers. 
Notably we focus on the case when most of the $\la^i$ are equal to $\rho = [1^{n-2}2^1]$. We denote $a^r_\la(\al) =  a_{\la, \rho,\ldots,\rho}(\al)$ (with $\rho$ appearing $r$ times, $r\geq0$), i.e.: 
\begin{equation}
\label{eq : ar}
a^r_\la(\al) =\left [p_{\la}(x^1)\prod_{i \geq 2}p_{\rho}(x^i)\right ] \sum_{\beta \vdash n}\frac{1}{\langle J^{\al}_\beta,J^{\al}_\beta\rangle_{\al}}\prod_{i}J_\beta^\alpha(x^i)
\end{equation}
The coefficients $a^r_\la(\al)$ are of particular interest. As shown by Hanlon in \cite{H92}, they give the probability that a random walk of length $r$ on $S_n$ which begins at the identity permutation ends at a permutation of type $\lambda$. The transition probabilities for this random walk are such that only moves from a permutation $\pi$ to $\pi\circ\tau$ where $\tau$ is a transposition are possible. The probability of moving from a permutation of cycle length $l$ to a permutation of cycle length $l-1$ is $\alpha$ times greater than moving to a permutation of cycle length $l+1$. Additionally, as shown in Section \ref{sec : JCCLBO}, the $a^r_\la(\al)$ give the coefficients in the power sum expansion of the symmetric function obtained by applying $r$ times the Laplace Beltrami operator to $p_1^n$.   
\begin{rem}
The motivations for choosing the definition of Equation (\ref{eq : df}) for Jack connection coefficients are detailed in Section \ref{sec : cccadca}, Equations (\ref{eq : lemSymJC}) and (\ref{eq : lemdcaJC}).
\end{rem}
Denote $r_\la = |\la|-\ell(\la)$. In Section \ref{sec : PT}, we show the following main results.
\begin{thm}
\label{thm : zero}
Let $a^r_\la(\al)$  be defined as above. Then for any integer partition $\la$ we have $a^r_\la(\al) = 0$ for $r<r_\la$.
\end{thm}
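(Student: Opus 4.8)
The plan is to use the description, recalled in the introduction and established in Section~\ref{sec : JCCLBO}, of $a^r_\la(\al)$ as (a nonzero multiple of) the coefficient of $p_\la$ in $D^r(p_1^n)$, where $D$ is the Laplace--Beltrami operator. Since the proportionality constants are nonzero, $a^r_\la(\al)$ vanishes if and only if $p_\la$ does not occur in $D^r(p_1^n)$. The theorem is therefore equivalent to the support statement that $D^r(p_1^n)$ is a linear combination of power sums $p_\nu$ with $\ell(\nu)\ge n-r$; indeed $\ell(\la)\ge n-r$ is exactly the condition $r\ge n-\ell(\la)=r_\la$.

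First I would write $D$ explicitly in the power-sum basis. Passing from its second-order differential form in the variables $x_i$ to the generators $p_1,p_2,\dots$ exhibits $D$ as a cut-and-join operator: a linear combination, with $\al$-dependent coefficients, of a join part $\sum_{i,j\ge 1}ij\,p_{i+j}\,\partial_{p_i}\partial_{p_j}$, a cut part $\sum_{i,j\ge 1}(i+j)\,p_i p_j\,\partial_{p_{i+j}}$, a diagonal part $\sum_{i\ge 1}c_i\,p_i\,\partial_{p_i}$, and a scalar. The point to verify is that no monomial of any other shape appears. Granting this, every term sends a power sum $p_\mu$ to a multiple of some $p_\nu$ with $\ell(\nu)-\ell(\mu)\in\{-1,0,+1\}$: the join part merges two parts into one, the cut part splits one part into two, and the diagonal and scalar parts preserve the number of parts. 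Hence $D$ carries $\mathrm{span}\{p_\nu:\ell(\nu)\ge k\}$ into $\mathrm{span}\{p_\nu:\ell(\nu)\ge k-1\}$.

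The theorem then follows by induction on $r$. The seed $p_1^n=p_{[1^n]}$ has exactly $n$ parts, so it lies in $\mathrm{span}\{p_\nu:\ell(\nu)\ge n\}$; applying the previous step $r$ times gives $D^r(p_1^n)\in\mathrm{span}\{p_\nu:\ell(\nu)\ge n-r\}$. Consequently $p_\la$ cannot occur whenever $\ell(\la)<n-r$, i.e. whenever $r<r_\la$, which is the claim. This bookkeeping matches the random-walk reading quoted in the introduction: a transposition changes the number of cycles of a permutation by exactly one, so strictly fewer than $n-\ell(\la)$ transpositions cannot connect the identity, with $n$ cycles, to any permutation of type $\la$, with $\ell(\la)$ cycles.

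I expect the one real obstacle to be the explicit power-sum form of $D$ together with the verification that its only terms change $\ell$ by at most one; once the cut-and-join shape is confirmed the grading and the induction are immediate. I would also check, as a minor point, that the scalar relating $a^r_\la(\al)$ to the coefficient of $p_\la$ in $D^r(p_1^n)$ is nonzero for every $\la$, so that it neither creates nor hides a vanishing.
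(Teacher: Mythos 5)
Your proposal is correct and follows essentially the same route as the paper: it invokes the identity $a^r_\la(\al)=\frac{1}{\al^n n!}[p_\la]D(\al)^r(p_1^n)$ (Theorem \ref{thm : ar}) and the cut-and-join decomposition $D(\al)=(\al-1)N+\al U+S$ (Lemma \ref{lem : dp}), where $N$, $U$, $S$ change the number of parts by $0$, $-1$, $+1$ respectively, so that $r$ applications of $D(\al)$ to $p_{[1^n]}$ cannot produce any $p_\la$ with $\ell(\la)<n-r$. This is exactly the argument the paper uses via Lemma \ref{lem : U}, so no further comparison is needed.
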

\begin{thm}
\label{thm : pol}
For $r \geq r_\la$, $a^r_\la(\al)$ can be written in the form
\begin{equation}
a^r_\la(\al) = \frac{1}{n!}\sum_{i=0}^{r-r_\la}g_{\la,r}^i\al^{i-\ell(\la)}
\end{equation}
where the $g_{\la,r}^i$ are integers that verify 
\begin{equation}
g_{\la,r}^i = (-1)^{r-r_\la}g_{\la,r}^{r-r_\la-i}.
\end{equation}
\end{thm}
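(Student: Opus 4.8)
The plan is to pass from the spectral definition of $a^r_\la(\al)$ to the action of the Laplace--Beltrami operator on power sums, where polynomiality is transparent, and then to obtain the symmetry from the $\al \leftrightarrow 1/\al$ duality of Jack polynomials. Writing $J^\al_\beta = \sum_\mu \theta^\beta_\mu(\al)\, p_\mu$ and extracting the relevant power-sum coefficients in (\ref{eq : ar}) gives
$$a^r_\la(\al) = \sum_{\beta \vdash n} \frac{\theta^\beta_\la(\al)\,\theta^\beta_\rho(\al)^r}{\langle J^\al_\beta, J^\al_\beta\rangle_\al}.$$
I would first record the two elementary facts $\theta^\beta_{[1^n]}(\al) = 1$ and $\theta^\beta_\rho(\al) = \al\, n(\beta') - n(\beta)$; the latter is exactly the Laplace--Beltrami eigenvalue $\lambda_\beta(\al)$, as one checks by comparing the coefficient of $p_{[1^n]}$ on both sides of $D J^\al_\beta = \lambda_\beta(\al) J^\al_\beta$. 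Together with the result of Section \ref{sec : JCCLBO}, this yields the clean identity $\al^n n!\, a^r_\la(\al) = [p_\la]\, D^r p_1^n$, where $D$ is the Laplace--Beltrami operator with power-sum form
$$D = \frac{\al}{2}\sum_{i,j\ge 1} ij\, p_{i+j}\frac{\partial^2}{\partial p_i \partial p_j} + \frac12\sum_{i,j\ge 1}(i+j)\, p_i p_j\frac{\partial}{\partial p_{i+j}} + \frac{\al-1}{2}\sum_{i\ge 1} i(i-1)\, p_i\frac{\partial}{\partial p_i}.$$

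Polynomiality is then immediate: since $D$ has coefficients that are polynomials in $\al$, so does every power-sum coefficient of $D^r p_1^n$. For the degree bounds and integrality I would expand $D^r p_1^n$ as a sum over length-$r$ sequences of power-sum monomials starting at $p_1^n$ and ending at $p_\la$, each step being a \emph{merge} (first sum: two parts $i,j$ fuse into $i+j$, so $\ell$ drops by $1$ and the step has $\al$-degree exactly $1$), a \emph{split} (second sum: $\ell$ rises by $1$, $\al$-degree $0$), or a \emph{diagonal} move (third sum: $\ell$ unchanged, $\al$-degree $0$ or $1$ because of the factor $\al-1$). The key bookkeeping lemma is the correlation between the change of $\ell$ and the $\al$-degree: a sequence with $a$ merges, $b$ splits and $c$ diagonals satisfies $a-b = n - \ell(\la) = r_\la$ and contributes an $\al$-degree in the interval $[a, a+c] \subseteq [r_\la, r]$. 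Hence $[p_\la]D^r p_1^n$ involves only the powers $\al^{r_\la}, \ldots, \al^{r}$, so $\al^{-r_\la}[p_\la]D^r p_1^n = \sum_{i=0}^{r-r_\la} g_{\la,r}^i\,\al^i$ with the required range, and this reproves Theorem \ref{thm : zero} as well (for $r<r_\la$ the sum is empty). Integrality of the $g_{\la,r}^i$ follows by checking that the $\tfrac12$'s in $D$ always cancel against the multiplicities coming from ordered pairs $(i,j)$ and from repeated parts.

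For the symmetry I would exploit the involution $\w_\al$ defined by $\w_\al(p_k) = (-1)^{k-1}\al\, p_k$, which is an isometry from $(\Lambda, \langle\cdot,\cdot\rangle_\al)$ to $(\Lambda, \langle\cdot,\cdot\rangle_{1/\al})$ and satisfies $\w_\al(J^\al_\beta) = \al^{n} J^{1/\al}_{\beta'}$. Substituting $J^{1/\al}_{\beta'} = \al^{-n}\w_\al(J^\al_\beta)$ into the spectral formula for $a^r_\la(1/\al)$, reindexing the sum by $\beta \mapsto \beta'$, and using $\lambda_{\beta'}(1/\al) = -\al^{-1}\lambda_\beta(\al)$ together with $\w_\al(p_\mu) = (-1)^{|\mu|-\ell(\mu)}\al^{\ell(\mu)} p_\mu$, all $\beta$-dependence cancels and I obtain the functional equation
$$a^r_\la(1/\al) = (-1)^{r+r_\la}\,\al^{\,n+\ell(\la)-r}\, a^r_\la(\al).$$
Setting $P(\al) = n!\,\al^{\ell(\la)} a^r_\la(\al) = \sum_{i=0}^{d} g_{\la,r}^i\,\al^i$ with $d = r-r_\la$, which is a polynomial by the previous paragraph, this reads $\al^{d} P(1/\al) = (-1)^{d} P(\al)$, precisely the palindromic identity $g_{\la,r}^i = (-1)^{r-r_\la}\, g_{\la,r}^{\,r-r_\la-i}$.

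The main obstacle is the bookkeeping lemma of the second paragraph: making the explicit power-sum form of $D$ (in particular the correct sign of the diagonal term) interact with the length--degree correlation so as to pin the powers of $\al$ sharply into $[r_\la, r]$, and checking that the half-integer coefficients of $D$ conspire to leave integers. By contrast, once polynomiality and the degree range are secured, the symmetry is a formal consequence of Jack duality.
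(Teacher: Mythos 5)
Your proof is correct, and it rests on the same two pillars as the paper's: the identity $\al^n n!\,a^r_\la(\al) = [p_\la]D(\al)^r(p_1^n)$ (Theorem \ref{thm : ar}) together with the decomposition $D(\al) = (\al-1)N + \al U + S$ (Lemma \ref{lem : dp}), and the $\al \leftrightarrow 1/\al$ Jack duality (Theorem \ref{thm : inv}); your functional equation $a^r_\la(1/\al) = (-1)^{r+r_\la}\al^{\,n+\ell(\la)-r}a^r_\la(\al)$ coincides exactly with the paper's $a^r_\la(\al^{-1}) = (-\al)^{n+\ell(\la)-r}a^r_\la(\al)$. The one genuine divergence is where the degree window comes from. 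The paper reads off from $D(\al) = \al(N+U)+(S-N)$ only that $n!\al^n a^r_\la(\al)$ lies in $\mathbb{Z}[\al]$ with degree at most $r$, and then uses the palindromy $h^{r+r_\la-i}_{\la,r} = (-1)^{r-r_\la}h^i_{\la,r}$ coming from duality to force $h^j_{\la,r}=0$ for $j<r_\la$; you instead pin the exponents into $[r_\la,r]$ combinatorially, via the merge/split/diagonal bookkeeping ($a-b=r_\la$, degree in $[a,a+c]$), and invoke duality only for the symmetry. Your variant is self-contained at that step and makes Theorem \ref{thm : zero} a byproduct (the paper proves it separately, via Lemma \ref{lem : U}, using the same length bookkeeping), while the paper's route gets the lower bound for free from the symmetry without tracking individual words in $N,U,S$; both are sound, and your integrality remark (the halves in $U$ and $S$ cancel against ordered-pair and repeated-part multiplicities) is exactly the content of the Proposition following Lemma \ref{lem : dp}. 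One detail of yours worth keeping: the automorphism $\w_\al(p_k) = (-1)^{k-1}\al\,p_k$ you use is the correct Macdonald one (it is an isometry from $\langle\cdot,\cdot\rangle_\al$ to $\langle\cdot,\cdot\rangle_{1/\al}$ and satisfies $\w_\al(J^\al_\beta) = \al^{n}J^{1/\al}_{\beta'}$), whereas the paper's proof of Theorem \ref{thm : inv} misstates it as $\w_\al p_r = -(-\al)^r p_r$, a version that already fails on $J^\al_{(2)} = p_1^2+\al p_2$; the paper's conclusion, which is all that either argument uses, is nevertheless correct.
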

As a consequence of Theorem \ref{thm : pol} we have the following corollary:
\begin{cor}
Let $\la$ be an integer partition of $n$ and $r$ an integer greater than $r_\la$, then $n!\al^{\ell(\la)}a^r_\la(\al)$ is a polynomial in $\al$ with integer coefficients of degree at most $r-r_\la$.
 \end{cor}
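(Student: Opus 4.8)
The plan is to derive the corollary directly from the explicit form supplied by Theorem \ref{thm : pol}, so no new machinery is required. First I would invoke that theorem, which for $r \geq r_\la$ expresses
\[
a^r_\la(\al) = \frac{1}{n!}\sum_{i=0}^{r-r_\la}g_{\la,r}^i\al^{i-\ell(\la)},
\]
where each $g_{\la,r}^i$ is an integer. Since $r > r_\la$ in particular satisfies $r \geq r_\la$, this representation applies.

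Next, multiplying both sides by $n!\,\al^{\ell(\la)}$ clears the denominator and raises every exponent of $\al$ by $\ell(\la)$, yielding
\[
n!\,\al^{\ell(\la)}a^r_\la(\al) = \sum_{i=0}^{r-r_\la}g_{\la,r}^i\,\al^{i}.
\]
The right-hand side is visibly a polynomial in $\al$: its coefficients are the integers $g_{\la,r}^i$, and the summation index $i$ runs over $0,1,\ldots,r-r_\la$, so every exponent is a nonnegative integer and the degree is at most $r-r_\la$ (with equality precisely when $g_{\la,r}^{r-r_\la}\neq 0$). This is exactly the assertion of the corollary.

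The only point worth flagging — and it is already absorbed into Theorem \ref{thm : pol} rather than something to be reproved here — is that the negative powers of $\al$ one might fear from the normalizing factor $\langle J^\al_\beta,J^\al_\beta\rangle_\al$ in the definition (\ref{eq : ar}) are controlled: the theorem guarantees that the smallest exponent occurring in $a^r_\la(\al)$ is $-\ell(\la)$, so the prefactor $\al^{\ell(\la)}$ is precisely what is needed to cancel all negative powers and leave a genuine polynomial with integer coefficients. Consequently there is no real obstacle at the level of the corollary itself; all the substantive work resides in establishing Theorem \ref{thm : pol}, from which the polynomiality, integrality of coefficients, and degree bound here follow by the single multiplication above.
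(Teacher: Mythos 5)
Your proof is correct and is precisely the argument the paper intends: the corollary is stated as an immediate consequence of Theorem \ref{thm : pol}, obtained by multiplying the expansion $a^r_\la(\al) = \frac{1}{n!}\sum_{i=0}^{r-r_\la}g_{\la,r}^i\al^{i-\ell(\la)}$ by $n!\,\al^{\ell(\la)}$, exactly as you do. No gap; the substantive content indeed lives entirely in Theorem \ref{thm : pol}.
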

 We also show:
\begin{thm}
\label{thm : polpos}
Let $\la$ be an integer partition of $n$ and $r$ an integer greater than $r_\la$, then $n!\al^{\ell(\la)}a^r_\la(\al)$ is a polynomial in $b = \al-1$ with {\bf non negative} integer coefficients of degree at most $r-r_\la$.
 \end{thm}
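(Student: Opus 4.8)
The plan is to argue by induction on $r$, using the transposition-by-transposition recurrence supplied by the Laplace--Beltrami operator of Section~\ref{sec : JCCLBO}. Throughout I work with the renormalised quantities
\[
G^r_\la(b):=n!\,\al^{\ell(\la)}a^r_\la(\al),\qquad \al=1+b,
\]
which by Theorem~\ref{thm : pol} are polynomials in $\al$, and hence in $b$, of degree at most $r-r_\la$; the only thing left to prove is that their coefficients in $b$ are non-negative integers.

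First I would record the action on the power-sum basis of the operator $\mathcal D_\al$ whose iterates generate the $a^r_\la$. Using that $[p_1^n]J^\al_\beta=1$ for every $\beta$, the generating identity of Section~\ref{sec : JCCLBO} reads $n!\,\al^{n}a^r_\la=[p_\la]\,\mathcal D_\al^{\,r}(p_1^n)$, where
\[
\mathcal D_\al=\frac{\al}{2}\sum_{i,j\ge1}ij\,p_{i+j}\frac{\partial^2}{\partial p_i\,\partial p_j}+\frac12\sum_{i,j\ge1}(i+j)\,p_ip_j\frac{\partial}{\partial p_{i+j}}+(\al-1)\sum_{k\ge1}\binom{k}{2}p_k\frac{\partial}{\partial p_k}.
\]
Applied to $p_\mu$, this produces exactly three kinds of terms: \emph{merge} terms that join two parts of $\mu$ (so $\ell$ drops by one), each with coefficient $\al$ times a non-negative integer such as $ij\,m_i(\mu)m_j(\mu)$ or $\tfrac12 i^2m_i(\mu)(m_i(\mu)-1)$; \emph{split} terms that cut one part of $\mu$ in two (so $\ell$ rises by one), each with a non-negative integer coefficient such as $(i+j)m_{i+j}(\mu)$; and the single \emph{diagonal} term $(\al-1)\,n(\mu')\,p_\mu$. (The halves are integers because $m_i(m_i-1)$ is even and $2i$ is even.)

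Feeding this into $\mathcal D_\al^{\,r}=\mathcal D_\al\circ\mathcal D_\al^{\,r-1}$ and tracking powers of $\al$ through $\ell(\la)-\ell(\mu)=r_\mu-r_\la$, the factor $\al$ of each merge cancels the drop $\al^{-1}$, each split acquires an extra factor $\al=1+b$, and the diagonal keeps its $\al-1=b$; this gives
\[
G^r_\la=\sum_{\mu}M_{\la\mu}\,G^{r-1}_\mu+(1+b)\sum_{\nu}S_{\la\nu}\,G^{r-1}_\nu+b\,n(\la')\,G^{r-1}_\la,
\]
the first sum over partitions $\mu$ that merge to $\la$ (so $r_\mu=r_\la-1$), the second over partitions $\nu$ that split to $\la$ (so $r_\nu=r_\la+1$), with $M_{\la\mu}$, $S_{\la\nu}$ and $n(\la')$ all non-negative integers. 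Hence every coefficient on the right-hand side, namely $M_{\la\mu}$, $(1+b)S_{\la\nu}$ and $b\,n(\la')$, is a polynomial in $b$ with non-negative integer coefficients. The induction is now immediate: the base case is $G^0_\la=\delta_{\la,[1^n]}$ (since $\mathcal D_\al^{\,0}p_1^n=p_1^n$ and $r_{[1^n]}=0$), a non-negative integer; and if every $G^{r-1}_\mu$ is a non-negative-integer polynomial in $b$ of degree at most $(r-1)-r_\mu$, then the displayed recurrence writes $G^r_\la$ as a non-negative combination of such polynomials, of $(1+b)$ times such, and of $b$ times such. The merge and diagonal terms reach degree $r-r_\la$ while the split terms reach only $r-r_\la-1$, so $\deg_b G^r_\la\le r-r_\la$; and for $r<r_\la$ all three sums vanish, recovering Theorem~\ref{thm : zero}.

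The hard part will be the first step, i.e.\ establishing the precise power-sum action of $\mathcal D_\al$. Two points there do the real work: the diagonal term $(\al-1)\,n(\mu')$, which is invisible at $\al=1$ but is exactly what supplies the clean factor $b$; and the normalisation $[p_1^n]J^\al_\beta=1$, which makes the constant relating $a^r_\la$ to $[p_\la]\mathcal D_\al^{\,r}(p_1^n)$ independent of both $\la$ and $\beta$ and thereby lets the powers of $\al$ cancel exactly as above. Once the recurrence is in the stated form, with non-negative integer weights $M_{\la\mu},S_{\la\nu}$ and the single extra factors $1+b$ and $b$ on the split and diagonal branches, positivity in $b$ follows with no further computation.
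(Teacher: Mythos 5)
Your proof is correct, and it rests on the same two pillars as the paper's own argument: the identity $n!\,\al^n a^r_\la(\al) = [p_\la]D(\al)^r(p_1^n)$ (Theorem \ref{thm : ar}) and the decomposition $D(\al) = (\al-1)N + \al U + S$ into operators whose power-sum coefficients are non-negative integers (Lemma \ref{lem : dp}; the paper also takes this as classical, so deferring its verification is not a gap, and your explicit formulas for the merge, split and diagonal terms match it exactly). Where you genuinely differ is the bookkeeping. The paper regroups the operator as $(\al-1)(N+U)+(S+U)$, concludes that $n!\,\al^{n}a^r_\la$ has non-negative integer coefficients in $b$, and then appeals to Theorem \ref{thm : pol} to pass to the normalization $n!\,\al^{\ell(\la)}a^r_\la$. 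That last passage is the delicate point: knowing that $(1+b)^{r_\la}P(b)$ has non-negative coefficients and that $P$ has integer coefficients does not in general force $P$ to have non-negative coefficients (e.g.\ $P=1-b+b^2$ gives $(1+b)P=1+b^3$), so the division by $\al^{r_\la}$ needs an argument. Your recurrence
\[
G^r_\la=\sum_{\mu}M_{\la\mu}\,G^{r-1}_\mu+(1+b)\sum_{\nu}S_{\la\nu}\,G^{r-1}_\nu+b\,n(\la')\,G^{r-1}_\la
\]
supplies exactly that argument: by pairing each merge step's factor $\al$ with the drop in $\ell$, each split with a leftover factor $1+b$, and the diagonal with $b$, the powers of $\al$ cancel identically, and induction on $r$ delivers non-negativity, integrality, the degree bound $r-r_\la$, and Theorem \ref{thm : zero} in one stroke. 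So your route is a sharpened version of the paper's: same operator decomposition, but running the induction on the exactly normalized quantities $G^r_\la=n!\,\al^{\ell(\la)}a^r_\la$ makes rigorous the cancellation that the paper's one-line proof only gestures at.
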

\begin{thm}
\label{thm : denes}
Let $a^r_\la(\al)$  be defined as above. In the limit case $r = r_\la$, we have
\begin{equation}
\label{eq : thm}
a^{|\la|-\ell(\la)}_\la(\al) = \frac{(|\la|-\ell(\la))!}{\al^{\ell(\la)}Aut_\la\prod_i\la_i! }\prod_i\la_i^{\la_i-2}.
\end{equation}
 \end{thm}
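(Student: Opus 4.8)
The plan is to exploit the rigidity already established in Theorem~\ref{thm : pol}. In the extremal case $r=r_\la$ the summation range there collapses to the single index $i=0$, so Theorem~\ref{thm : pol} forces
\[
a^{r_\la}_\la(\al)=\frac{g^{0}_{\la,r_\la}}{n!}\,\al^{-\ell(\la)},
\]
a pure monomial in $\al$ whose only unknown is the integer $g^{0}_{\la,r_\la}$. Since this identity holds for \emph{every} value of $\al$, the whole theorem reduces to pinning down one constant, and the cheapest way to do that is to evaluate at a convenient point. I would choose $\al=1$, where Jack functions specialise to normalised Schur functions, $J^{1}_\be=h_\be(1)s_\be$, and the entire apparatus collapses onto the character theory of $S_n$.

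Concretely, I would first compute $a^{r}_\la(1)$ directly from the definition (\ref{eq : ar}) by extracting the relevant power-sum coefficients from $J^{1}_\be=h_\be(1)s_\be$, using that the coefficient of $p_\mu$ in $s_\be$ is $\chi^\be_\mu/z_\mu$ (with $\chi^\be$ the irreducible character) and that $\langle J^{1}_\be,J^{1}_\be\rangle_1=h_\be(1)h'_\be(1)=h_\be(1)^2$. Writing $f^\be=\chi^\be_{[1^n]}$ for the dimension and $h_\be(1)=n!/f^\be$, the powers of $h_\be(1)$ cancel and the defining sum becomes a Frobenius-type character sum. Comparing with the classical formula for the number $N$ of ordered factorisations of a fixed permutation of type $\la$ into $r$ transpositions yields the clean identity
\[
a^{r}_\la(1)=\frac{N}{z_\la},
\]
which is exactly the $\al=1$ incarnation of the class-algebra interpretation of Section~\ref{sec : cccadca}. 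Setting $r=r_\la$, the constant we seek is therefore $g^{0}_{\la,r_\la}/n!=N/z_\la$, with $N$ now the number of \emph{minimal} transposition factorisations of a fixed permutation of type $\la$.

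The remaining input is purely combinatorial. In a length-$r_\la$ (hence minimal) factorisation $\sigma=\tau_1\cdots\tau_{r_\la}$, the transposition graph on the $n$ points is a forest whose connected components are exactly the cycles of $\sigma$; each component on a cycle of length $\la_i$ is a tree with $\la_i-1$ edges, i.e.\ a minimal factorisation of a single $\la_i$-cycle. By D\'enes' theorem the latter is counted by $\la_i^{\la_i-2}$, and the $r_\la$ transpositions may be shuffled across the cycles in $r_\la!/\prod_i(\la_i-1)!$ ways, so that $N=\frac{r_\la!}{\prod_i(\la_i-1)!}\prod_i\la_i^{\la_i-2}$. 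Substituting into $N/z_\la$ and simplifying with $z_\la=Aut_\la\prod_i\la_i$ together with $\la_i(\la_i-1)!=\la_i!$ collapses the expression to $\frac{(|\la|-\ell(\la))!}{Aut_\la\prod_i\la_i!}\prod_i\la_i^{\la_i-2}$; reinstating the monomial factor $\al^{-\ell(\la)}$ gives the claimed formula.

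I expect the delicate points to be bookkeeping rather than ideas. The main obstacle is the $\al=1$ reduction: one must track the normalisations $h_\be(1)$, $\langle J^{1}_\be,J^{1}_\be\rangle_1$ and the powers of $f^\be$ carefully enough to land \emph{exactly} on the Frobenius character sum, and then invoke the correct version of the factorisation formula with its class-size normalisation $z_\la$. A secondary point is the justification of the forest structure of a minimal factorisation (equivalently, that minimality forbids any transposition joining two distinct cycles of $\sigma$), which is what makes the per-cycle factorisations independent and licenses the multinomial shuffle. Once these are in place, and granting the monomial shape supplied by Theorem~\ref{thm : pol}, the argument is entirely formal.
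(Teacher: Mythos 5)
Your proposal is correct, and it takes a genuinely different route from the paper's proof. You obtain the monomial shape $a^{r_\la}_\la(\al)=g^0_{\la,r_\la}\al^{-\ell(\la)}/n!$ from Theorem~\ref{thm : pol}, then determine the single unknown constant by specializing at $\al=1$, where Lemma~\ref{lem : sym} (Equation~(\ref{eq : lemSymJC})) gives $a^{r}_\la(1)=z_\la^{-1}c^{\la}_{\rho,\ldots,\rho}$, and you evaluate that count combinatorially: minimality forces the transposition multigraph to be a disjoint union of trees, one spanning each cycle of $\sigma$, so the factorization splits into independent minimal factorizations of the individual cycles (D\'enes' $\la_i^{\la_i-2}$ for each) interleaved in $r_\la!/\prod_i(\la_i-1)!$ ways; the algebra with $z_\la=Aut_\la\prod_i\la_i$ then lands exactly on Equation~(\ref{eq : thm}). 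The paper instead never leaves the Laplace--Beltrami framework: Lemma~\ref{lem : U} shows that only the part-merging operator $U$ contributes in the extremal case $r=r_\la$ (this is where the paper gets the same $\al^{-\ell(\la)}$ monomial that you extract from Theorem~\ref{thm : pol}), Lemma~\ref{lem : recU} gives a product recursion for $[p_\la]U^{r_\la}(p_1^n)$, and the one-part case is solved from scratch through the functional equation $G(u)=u\exp(G(u))$, so the value $n^{n-2}$ is re-derived rather than quoted. What each approach buys: yours is shorter (given Theorem~\ref{thm : pol} and Lemma~\ref{lem : sym}) and makes the combinatorial meaning of the coefficients completely transparent, but it turns the theorem into a corollary of D\'enes' formula together with the standard forest decomposition, both external inputs; the paper's proof is self-contained, which is what entitles it to present the theorem as a generalization of D\'enes rather than a bootstrap from it. There is no circularity in your version---Theorem~\ref{thm : pol} and Lemma~\ref{lem : sym} are established independently of Theorem~\ref{thm : denes}, and D\'enes' 1959 result is external to the paper---but the forest-structure step should be stated and proved rather than asserted (a product of $r$ transpositions whose multigraph has $c$ components satisfies $r\ge n-c$, and each cycle of the product lies inside one component; equality forces one tree per cycle), since it is the one ingredient of your outline that carries real content and licenses both the independence of the per-cycle factorizations and the commutation that makes every interleaving valid.
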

\begin{rem}
In the specific case $\la = (n)$, Equation~\ref{eq : thm} reads
\begin{equation}
\label{eq : gendenes}
a^{n-1}_{(n)}(\al) = \frac{1}{\al n}n^{n-2}.
\end{equation}
We view this later formula as a generalization of the classical formula attributed to D\'enes {\em \cite{D59}} for the number of minimal factorizations of a long cycle in the symmetric group into a product of transpositions.
\end{rem}
\subsection{Organization of the paper}
The paper is organized as follows. In Section \ref{sec : mot} we focus on the motivations for studying Jack connection coefficients as defined in Equation (\ref{eq : df}). We also provide some background results about classical connections coefficients. Section \ref{sec : PT} provides the proofs of Theorems \ref{thm : zero}, \ref{thm : pol}, \ref{thm : polpos} and \ref{thm : denes}. Section \ref{sec : FR} gives additional results based on the development in the previous section. 
\section{Motivations and background}
\label{sec : mot}
\subsection{Jack characters}
For any integer partition $\la$ of $n$ let $C_\lambda$ be the {\bf conjugacy class} of $S_n$ containing the permutations of cycle type $\lambda$. The cardinality of the conjugacy classes is given by $|C_\la| = n!/z_\la$. Additionally, $B_n$ is the {\bf hyperoctahedral group} (i.e. the centralizer of $f_\star = (12)(34)\ldots(2n-1\,2n)$ in $S_{2n}$). As shown in e.g. \cite[VII.2]{IGM} the {\bf double cosets} of $B_n$ in $S_{2n}$ are also indexed by integer partitions of $n$. We denote by $K_\lambda$ the double coset indexed by $\la \vdash n$ consisting of all the permutations $\omega$ of $S_{2n}$ such that $f_\star \circ\omega\circ f_\star\circ\omega^{-1}$ has cycle type $(\lambda_1,\lambda_1, \lambda_{2},\lambda_{2},...,\lambda_p,\lambda_p)$. We have $|B_n| = 2^nn!$ and $|K_\la| = |B_n|^2/(2^{\ell(\la)}z_\la)$.
The link between Schur (resp. zonal) polynomials and irreducible characters of the symmetric group (resp. zonal spherical functions) is given by the decomposition of the $s_\la$ (resp. $Z_\la$) in the power sum basis
\begin{align}
\label{eq : characters}s_\la &= \sum_{\mu \vdash n}z_\mu^{-1}\chi^\la_\mu p_\mu,\\
\label{eq : spherical}Z_\la &= \frac{1}{| B_n |}\sum_{\mu \vdash n}\varphi^\beta_\mu p_\mu,
\end{align}
where $\chi^\la_\mu$ is the value of the irreducible character of the symmetric group $\chi^\la$ indexed by integer partition $\la$ at any element of $C_\mu$ and $\varphi^\beta_\mu = \sum_{\omega \in K_\mu}\chi^{2\beta}(\omega)$. The value of the zonal spherical function indexed by $\lambda$ of the Gelfand pair $(S_{2n}, B_n)$ at the elements of the double coset $K_\mu$ is given by $| K_\mu |^{-1} \varphi^{\la}_\mu$.
Using the fact that both Schur and zonal symmetric functions are special cases of Jack symmetric functions, it is natural to focus on a more general form of Equations (\ref{eq : characters}) and (\ref{eq : spherical}). Formally, let $\theta_\mu^\la(\alpha)$ denote the coefficient of $p_\mu$ in the expansion of $J^\alpha_\la$ in the power sum basis:
\begin{equation}
J^\alpha_\la = \sum_{\mu}\theta_\mu^\la(\alpha)p_\mu.
\end{equation}
According to Equations (\ref{eq : characters}) and (\ref{eq : spherical}), up to a normalization factor, the $\theta_\mu^\la(\alpha)$'s coincide with the irreducible characters of the symmetric group and the zonal spherical functions in the cases $\alpha =1$ and $\alpha = 2$. In the general case Do\l \k{e}ga and F\'eray \cite{DF} named them {\bf Jack characters}.\\
The coefficients in the power sum expansion of Jack symmetric functions have received significant attention over the past decades. As an example, Hanlon conjectured a first combinatorial interpretation for them in \cite{H88} in terms of digraphs. Stanley proved various results for these coefficients in \cite{S89}. For instance for simple values of $\mu$ we have the following formulas:
\begin{align}
&\theta_{[1^n]}^\la(\alpha) = 1,\\
&\theta_{[1^{n-2}2^1]}^\la(\alpha) = \alpha n(\la') - n(\la),\\
&\theta_{(n)}^\la(\alpha) = \prod_{s \in \la \setminus\{(1,1)\}}(\al a'(s)-l'(s)).
\end{align}
More recent works by Lasalle (see e.g. \cite{ML} and \cite{ML09}) reconsidered these coefficients as generalizations of irreducible characters of the symmetric group. Lasalle conjectured various polynomial properties in $\al$ for the $\theta_\mu^\la(\alpha)$. Do\l \k{e}ga and F\'eray (\cite{DF}) proved them partially and showed that Jack characters are polynomials in $\al$ with rational coefficients. Together with P. \'Sniady \cite{DFS} they conjectured an expression involving a measure of "non-orientability" of  locally orientable hypermaps (an expression also introduced in \cite{GJ96}). Do\l \k{e}ga and F\'eray also proved in \cite{DF14} that the $c^\la_{\mu\nu}(b)$ introduced by Goulden and Jackson are polynomials in $b$ with rational coefficients.  
\subsection{Connection coefficients of the class algebra and the double coset algebra}
\label{sec : cccadca}
By abuse of notation let $C_\lambda$ (resp. $K_\lambda$) also represent the formal sum of its elements in the group algebra $\mathbb{C} S_{n}$ (resp. $\mathbb{C} S_{2n}$). Then $\{C_\lambda, \lambda \vdash n\}$ (resp. $\{K_\lambda, \lambda \vdash n\}$) forms a basis of the {\bf class algebra} (resp.  {\bf double coset algebra}, i.e. the commutative subalgebra of $\mathbb{C} S_{2n}$ identified as the Hecke algebra of the Gelfand pair $(S_{2n},B_n)$).
For $\lambda^{i} \vdash n$ ($1 \leq i\leq s$), we define the {\bf connection coefficients of the class algebra} $
c^{\la^1}_{\lambda^2,\ldots,\la^s}$ and the {\bf connection coefficients of the double coset algebra} $b^{\la^1}_{\lambda^2,\ldots,\la^s}$ by
\begin{equation}
c^{\la^1}_{\lambda^2,\ldots,\la^s} = [C_{\la^1}]\prod_{i \geq 2}C_{\lambda^i}, \;\;\;\;\; b^{\la^1}_{\lambda^2,\ldots,\la^s} = [K_{\la^1}]\prod_{i \geq 2}K_{\lambda^i}.
\end{equation}
From a combinatorial point of view $c^{\la^1}_{\lambda^2,\ldots,\la^s}$ is the number of ways to write a given permutation $\sigma_1$ of $C_{\la^1}$ as the ordered product of $s-1$ permutations $\sigma_2\circ\ldots\circ\sigma_s$ where $\sigma_i$ is in $C_{\lambda^i}$. Similarly, $b^{\la^1}_{\lambda^2,\ldots,\la^s}$ counts the number of ordered factorizations of a given element in $K_{\la^1}$ into $s-1$ permutations of $K_{\lambda^2},\ldots,K_{\lambda^s}$. Connections coefficients are also strongly related to characters and zonal spherical functions.
\begin{lem}\label{lem : sym} The connection coefficients  of the symmetric group and the double coset algebra are linked to the irreducible characters of the symmetric group and the zonal spherical functions through the equations
\begin{align}
\label{eq : lemSym}c^{\la^1}_{\lambda^2,\ldots,\la^s} &=\frac{n!}{|C_{\la^1}|}\sum_{\beta \vdash n}\frac{1}{h_{\beta}(1)^2}\prod_{i \geq 1}h_{\beta}(1)z_{\la^i}^{-1}\chi^\beta_{\la^i}, \\
\label{eq : lemdca}b^{\la^1}_{\lambda^2,\ldots,\la^s} &= \frac{1}{|K_{\la^1}|}\sum_{\beta \vdash n}\frac{1}{h_\beta(2)h'_\beta(2)}\prod_{i \geq 1}\varphi^\beta_{\la^i}.
\end{align}
\end{lem}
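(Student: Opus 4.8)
The plan is to read both identities as two instances of the same principle: each expresses the structure constants of a \emph{commutative semisimple} subalgebra of a group algebra, so the natural tool is the basis of primitive idempotents, which diagonalizes multiplication. For the class algebra this is the spectral decomposition of the center $Z(\mathbb{C}S_n)$; for the double coset algebra it is the parallel decomposition of the Hecke algebra of the Gelfand pair $(S_{2n},B_n)$, with zonal spherical functions playing the role of normalized characters. In both cases I would (i) expand the ``class/coset sum'' basis in the idempotents, (ii) multiply using orthogonality of idempotents, and (iii) read off the coefficient of $K_{\la^1}$ (resp. $C_{\la^1}$) by re-expanding the idempotents in the original basis.

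For \eqref{eq : lemSym}, recall the central idempotents $e_\beta=\frac{f^\beta}{n!}\sum_\mu\chi^\beta_\mu C_\mu$, where $f^\beta=\dim V^\beta=\chi^\beta_{[1^n]}$; they satisfy $e_\beta e_{\beta'}=\delta_{\beta\beta'}e_\beta$ and $\sum_\beta e_\beta=1$. Since $C_\mu$ is central, it acts on $V^\beta$ by the scalar $\omega^\beta(C_\mu)=|C_\mu|\chi^\beta_\mu/f^\beta$, whence $C_\mu=\sum_\beta\omega^\beta(C_\mu)e_\beta$. Multiplying the factors $C_{\la^i}$ ($i\ge 2$) and using orthogonality collapses the product to $\prod_{i\ge2}C_{\la^i}=\sum_\beta\big(\prod_{i\ge2}\omega^\beta(C_{\la^i})\big)e_\beta$, and extracting the coefficient of $C_{\la^1}$ via the formula for $e_\beta$ gives $c^{\la^1}_{\la^2,\ldots,\la^s}=\sum_\beta\frac{f^\beta}{n!}\chi^\beta_{\la^1}\prod_{i\ge2}\frac{|C_{\la^i}|\chi^\beta_{\la^i}}{f^\beta}$. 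The stated form then follows by purely formal bookkeeping: substitute $|C_\mu|=n!/z_\mu$ and $f^\beta=n!/h_\beta(1)$ (here $h_\beta(1)=h'_\beta(1)$ is the product of hook lengths of $\beta$) and collect the prefactor $n!/|C_{\la^1}|=z_{\la^1}$, after which the $s$ factors $h_\beta(1)$ in $\prod_{i\ge1}h_\beta(1)$ combine with the $h_\beta(1)^{-2}$ to reproduce the exponents.

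For \eqref{eq : lemdca} I would run the identical three steps in the double coset algebra. The pair $(S_{2n},B_n)$ is a Gelfand pair whose spherical representations are exactly the $V^{2\beta}$, $\beta\vdash n$, so the commutative algebra spanned by the $K_\mu$ has primitive idempotents $E_\beta=e_{B_n}e_{2\beta}e_{B_n}$, the compression of the central idempotent of $V^{2\beta}$ by $e_{B_n}=\frac{1}{|B_n|}\sum_{h\in B_n}h$. A short averaging computation gives $E_\beta=\frac{f^{2\beta}}{(2n)!}\sum_\mu|K_\mu|^{-1}\varphi^\beta_\mu\,K_\mu$, where $f^{2\beta}=\dim V^{2\beta}$ and $|K_\mu|^{-1}\varphi^\beta_\mu$ is the constant value on $K_\mu$ of the zonal spherical function $\omega_\beta(g)=|B_n|^{-1}\sum_{h\in B_n}\chi^{2\beta}(hg)$. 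The eigenvalue of $K_\mu$ on the $\beta$-component is $\sum_{g\in K_\mu}\omega_\beta(g)=\varphi^\beta_\mu$ (using $K_\mu^{-1}=K_\mu$), so $K_\mu=\sum_\beta\varphi^\beta_\mu E_\beta$. Orthogonality gives $\prod_{i\ge2}K_{\la^i}=\sum_\beta\big(\prod_{i\ge2}\varphi^\beta_{\la^i}\big)E_\beta$, and the coefficient of $K_{\la^1}$ is $b^{\la^1}_{\la^2,\ldots,\la^s}=\frac{1}{|K_{\la^1}|}\sum_\beta\frac{f^{2\beta}}{(2n)!}\prod_{i\ge1}\varphi^\beta_{\la^i}$. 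It then remains to identify $f^{2\beta}/(2n)!=1/(h_\beta(2)h'_\beta(2))$, i.e. the classical hook identity $\prod_{s\in 2\beta}h(s)=h_\beta(2)h'_\beta(2)$ relating the hook products of the doubled diagram $2\beta$ to the $\alpha=2$ Jack data of $\beta$.

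I expect the class algebra identity to be essentially routine representation theory. The main obstacle is the double coset case: one must correctly set up the Gelfand pair spectral theory, verify that the relevant eigenvalue is precisely $\varphi^\beta_\mu$ (tracking the normalizations and the symmetry $K_\mu^{-1}=K_\mu$ carefully), and supply the hook-length identity $\prod_{s\in 2\beta}h(s)=h_\beta(2)h'_\beta(2)$, which I would either cite from the theory of zonal polynomials or verify directly from the definitions of $a(s),l(s)$ on $2\beta$ versus $\beta$.
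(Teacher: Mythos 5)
Your proposal is correct and takes essentially the same route as the paper: both proofs expand the class sums (resp.\ double coset sums) in the basis of orthogonal idempotents of the center of $\mathbb{C}S_n$ (resp.\ of the Hecke algebra of the Gelfand pair $(S_{2n},B_n)$), use orthogonality of idempotents to collapse the product, re-extract the coefficient of $C_{\lambda^1}$ (resp.\ $K_{\lambda^1}$), and finish with the substitutions $\deg(\chi^\beta)=n!/h_\beta(1)$ and $|C_\lambda|=n!/z_\lambda$ --- the paper merely cites \cite{HSS} for the intermediate expansions that you derive by hand. The one identity you leave to be supplied, $f^{2\beta}=(2n)!/\bigl(h_\beta(2)h'_\beta(2)\bigr)$, is indeed a known fact (it is precisely what puts the factor $1/\bigl(h_\beta(2)h'_\beta(2)\bigr)$ into the formula the paper quotes from \cite{HSS}), so there is no gap.
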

\begin{proof}[Equation~\ref{eq : lemSym}]
Using the fact that the elements
\begin{equation}
E_\la = \frac{deg(\chi^\la)}{|S_n|}\sum_{\mu \vdash n}\chi^\la_\mu C_\mu
\end{equation}
form a complete set of orthogonal indempotents for the center of $\mathbb{C} S_{n}$, it is easy to show (see \cite{HSS} for the detailed computation) that
\begin{align}
\prod_{i \geq 2}C_{\lambda^i}  = \frac{1}{|S_n|}\sum_{\mu \vdash n}C_\mu\sum_{\beta \vdash n}\frac{\prod_{i \geq 2}|C_{\lambda^i}|\chi^\beta_{\la^i}}{deg(\chi^\beta)^{s-2}}\chi^\beta_\mu.
\end{align}
As a result we get
\begin{equation}
c^{\la^1}_{\lambda^2,\ldots,\la^s} = [C_{\la^1}]\prod_{i \geq 2}C_{\lambda^i} = \frac{1}{|S_n|}\sum_{\beta \vdash n}\frac{\prod_{i \geq 2}|C_{\lambda^i}|\chi^\beta_{\la^i}}{deg(\chi^\beta)^{s-2}}\chi^\beta_{\la^1}.
\end{equation}
But $deg(\chi^\beta) = n!/h_\beta(1)$ and $|C_{\lambda}| = n!/z_\la$. This provides the desired result. 

\end{proof}
\begin{proof}[Equation~\ref{eq : lemdca}]
Using similar techniques as in the proof of Equation (\ref{eq : lemSym}), one can show (see~\cite{HSS}) that
\begin{equation}
\prod_{i \geq 2}K_{\lambda^i}  = \sum_{\mu \vdash n}K_\mu\frac{1}{|K_\mu|}\sum_{\beta \vdash n}\frac{1}{h_\beta(2)h'_\beta(2)}\varphi^\beta_\mu\prod_{i \geq 2}\varphi^\beta_{\la^i}.
\end{equation}
Extracting the coefficient in $K_{\la^1}$ gives the desired formula.
\end{proof}
Using the formalism of Jack characters Equations (\ref{eq : lemSym}) and (\ref{eq : lemdca}) read:
\begin{align}
\label{eq : lemSymJC}c^{\la^1}_{\lambda^2,\ldots,\la^s} &=\frac{n!}{|C_{\la^1}|}\sum_{\beta \vdash n}\frac{1}{{h_{\beta}(1)}{h'_{\beta}(1)}}\prod_{i \geq 1}\theta^\beta_{\la^i}(1),\\
\label{eq : lemdcaJC}b^{\la^1}_{\lambda^2,\ldots,\la^s} &= \frac{|B_n|^{s}}{|K_{\la^1}|}\sum_{\beta \vdash n}\frac{1}{h_\beta(2)h'_\beta(2)}\prod_{i \geq 1}\theta^\beta_{\la^i}(2).
\end{align}
Equations (\ref{eq : lemSymJC}) and (\ref{eq : lemdcaJC}) are very similar and the question of a more general approach to connection coefficients for any parameter $\alpha$ appears to be very natural. Jack connection coefficients provide the desired approach. For the sake of simplicity, further in this paper we use the following formulation for Jack connection coefficients:
\begin{equation}
 a_{\la^1,\la^2,\ldots,\la^s}(\al) = \sum_{\beta \vdash n}\frac{1}{h_\beta(\alpha)h'_\beta(\alpha)}\prod_{i}\theta^\beta_{\la^i}(\alpha),
\end{equation}
\begin{equation}
a^r_\la(\al) = \sum_{\beta \vdash n}\frac{1}{h_\beta(\alpha)h'_\beta(\alpha)}\theta^\beta_{\la}(\alpha)\left(\theta^\beta_{[1^{n-2}2^1]}(\alpha)\right)^r.
\end{equation}
This formulation is justified as Stanley showed in \cite{S89} that  the scalar product of the $J_\beta^\al$ may be written as:
\begin{equation}
{\langle J_\beta^\al,J_\beta^\al \rangle}_\al = h_\beta(\al)h'_\beta(\al).
\end{equation}

\subsection{Background on connection coefficients}
Except for special cases no closed formulas are known for the coefficients  $c^{\la^1}_{\lambda^2,\ldots,\la^s}$ and $b^{\la^1}_{\lambda^2,\ldots,\la^s}$. Using an inductive argument B\'{e}dard and Goupil  \cite{BG} first found a formula for $c^n_{\lambda,\mu}$ in the case $\ell(\la)+\ell(\mu)=n+1$, which was later reproved by Goulden and Jackson \cite{GJ92} via a bijection with a set of ordered rooted bicolored trees. Later, using characters of the symmetric group and a combinatorial development, Goupil and Schaeffer \cite{GS} derived an expression for connection coefficients of arbitrary genus as a sum of positive terms (see Biane \cite{PB} for a succinct algebraic derivation; and Poulalhon and Schaeffer \cite{PS}, and Irving \cite{JI} for further generalizations). 
Closed form formulas of the expansion of the generating series for the $c^{(n)}_{\lambda^2,\ldots,\la^s}$ (for general $s$) and $b^{(n)}_{\lambda,\mu}$ in the monomial basis were provided by Morales and Vassilieva and Vassilieva in \cite{MV09}, \cite{MV11}, \cite{V2012} and \cite{V2013}. Papers \cite{MV11} and \cite{V2013} use the topological interpretation of $c^{(n)}_{\lambda^2,\ldots,\la^s}$ and $b^{(n)}_{\lambda,\mu}$ in terms of unicellular locally orientable hypermaps and constellations of given vertex degree distribution. Jackson (\cite{DMJ}) computed a general expression for the generating series of the $\sum_{\ell(\la_i) = p_i}c^{\la^1}_{\lambda^2,\ldots,\la^s}$ in terms of some explicit polynomials. This expression allows him to compute the following formula when $\la_1 = (n)$ and $\la_2 = \ldots =\la_s = [1^{n-2}2^1]$:
\begin{equation}
\label{eq : J}
c^{(n)}_{[1^{n-2}2^1],\ldots,[1^{n-2}2^1]} = \frac{(s-1)!}{n!}n^{s-1}2^{n-s}[X^{s-1}]sh^{n-1}X.
\end{equation}
When $s=n$, $c^{(n)}_{[1^{n-2}2^1],\ldots,[1^{n-2}2^1]}$ is equal to $n^{n-2}$. This is the famous result classically attributed to D\'enes (\cite{D59}). Shapiro, Shapiro and Vainhstein \cite{SSV} reproved the elegant closed form generating series in Equation (\ref{eq : J}). \\
Additionally we define 
\begin{equation}
d_\la = \left[p_\la\frac{u^{n+\ell(\la)-2}t^{|\la|}}{r(\la)!|\la|!}\right]\log\left(1+\sum_{\rho,k}\frac{t^{|\rho|}u^k}{z_\rho k!}c^{\rho}_{\underbrace{[1^{n-2}2^1],\ldots,[1^{n-2}2^1]}_{k\; factors}}p_\rho\right)
\end{equation}
where the sums over $\rho$ is over all the non empty partitions. Goulden and Jackson proved the following result (see \cite{GJ97}):
\begin{equation}
d_\la = n^{\ell(\la)-3}(n+\ell(\la)-2)! \prod_{i}\frac{\la_i^{\la_i}}{(\la_i-1)!}.
\end{equation}
This is the number of minimal transitive factorizations of a permutation in $C_\la$ into a product of transpositions.
\section{Computation of Jack connection coefficients}
\label{sec : PT}
In this section we show various formulas for Jack connection coefficients that allow us to prove Theorems \ref{thm : zero}, \ref{thm : pol}, \ref{thm : polpos} and \ref{thm : denes}. 
\subsection{Inverting $\alpha$}
As a first result we show the relation between the values of Jack connection coefficients in parameters $\alpha$ and $\alpha^{-1}$. We have the following theorem:
\begin{thm} 
\label{thm : inv} Let $\la^i \vdash n$ for $1\leq i \leq s$ and $\al \neq 0$. The Jack connection coefficients for parameters $\al$  et $\alpha^{-1}$ are linked through the relation
\begin{equation}
a_{\la^1, \ldots, \la^s}(\al^{-1}) = (-\al)^{(2-s)n+\sum_{i}\ell(\la^i)}a_{\la^1, \ldots, \la^s}(\al).
\end{equation}
\end{thm}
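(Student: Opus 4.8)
The plan is to reduce the statement to the single classical fact that Jack polynomials are interchanged, up to an explicit scalar, by simultaneously inverting $\al$ and conjugating the indexing partition. Concretely, I would invoke the duality $\omega_\al J^\al_\la = \al^{|\la|} J^{1/\al}_{\la'}$, where $\omega_\al$ is the endomorphism of $\Lambda$ defined on power sums by $\omega_\al(p_\mu) = (-1)^{|\mu|-\ell(\mu)}\al^{\ell(\mu)} p_\mu$; at $\al = 1$ this specializes to the usual involution sending $s_\la$ to $s_{\la'}$, which is consistent since $h_\la(1) = h_{\la'}(1)$ is the product of hook lengths. Expanding both sides in the power sum basis and comparing the coefficient of $p_\mu$ (with $|\mu| = n$) yields the transformation rule for Jack characters, namely $\theta^\be_{\mu}(\al^{-1}) = (-1)^{n-\ell(\mu)}\al^{\ell(\mu)-n}\,\theta^{\be'}_{\mu}(\al)$.

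Next I would record the behaviour of the normalization factors $h_\be$ and $h'_\be$ under $\al \mapsto \al^{-1}$. This is elementary from the box-statistic definitions: conjugating a partition swaps armlengths and leglengths, so substituting $\al^{-1}$ and factoring out $\al^{-1}$ from each box gives $h_\be(\al^{-1}) = \al^{-n} h'_{\be'}(\al)$ and $h'_\be(\al^{-1}) = \al^{-n} h_{\be'}(\al)$, hence $h_\be(\al^{-1})h'_\be(\al^{-1}) = \al^{-2n} h_{\be'}(\al)h'_{\be'}(\al)$. I would then substitute these identities together with the character rule into the definition $a_{\la^1,\ldots,\la^s}(\al^{-1}) = \sum_{\be} \frac{1}{h_\be(\al^{-1})h'_\be(\al^{-1})}\prod_i \theta^\be_{\la^i}(\al^{-1})$, pulling out the factor $\al^{2n}$ from the denominator and the $s$ factors $(-1)^{n-\ell(\la^i)}\al^{\ell(\la^i)-n}$ from the numerators. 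Crucially, every surviving factor inside the sum is now indexed by $\be'$, so reindexing $\be \mapsto \be'$ over all partitions of $n$ turns the remaining sum into exactly $a_{\la^1,\ldots,\la^s}(\al)$.

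Collecting the scalar prefactors gives $\al^{2n}\cdot(-1)^{sn-\sum_i\ell(\la^i)}\,\al^{\sum_i\ell(\la^i)-sn}$, whose $\al$-power is $(2-s)n+\sum_i\ell(\la^i)$, matching the target. The one subtle point — the step I would treat most carefully — is the sign bookkeeping: I must check that $(-1)^{sn-\sum_i\ell(\la^i)}$ equals $(-1)^{(2-s)n+\sum_i\ell(\la^i)}$. This holds because the two exponents differ by $2\big(sn - n - \sum_i \ell(\la^i)\big)$, which is even, so the signs coincide and the prefactor is precisely $(-\al)^{(2-s)n+\sum_i\ell(\la^i)}$. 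I expect the main obstacle to lie entirely in fixing the exact normalization and signs in the Jack duality and in the resulting $\theta$-transformation rule; once those are pinned down, the remainder is just the reindexing $\be \mapsto \be'$ and this parity check.
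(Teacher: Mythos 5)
Your proof is correct and follows essentially the same route as the paper's own: it rests on the duality $\omega_\al J^\al_\be = \al^{|\be|} J^{1/\al}_{\be'}$, the resulting character rule $\theta^{\be}_{\mu}(\al^{-1}) = (-\al)^{\ell(\mu)-n}\theta^{\be'}_{\mu}(\al)$, the identity $h_\be(\al^{-1})h'_\be(\al^{-1}) = \al^{-2n}h_{\be'}(\al)h'_{\be'}(\al)$, and then substitution followed by the reindexing $\be \mapsto \be'$, exactly as in the paper. The one discrepancy is in your favor: your normalization $\omega_\al(p_r) = (-1)^{r-1}\al\, p_r$, i.e.\ $\omega_\al(p_\mu) = (-1)^{|\mu|-\ell(\mu)}\al^{\ell(\mu)}p_\mu$, is the correct (Macdonald) form, whereas the paper's printed definition $\omega_\al p_r = -(-\al)^r p_r$ is a typo that already fails the duality for $J^\al_{(2)} = p_1^2 + \al p_2$, although the paper's subsequent identities are the correct ones.
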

\begin{proof} Let $\omega_\al$ be the automorphism on $\Lambda$ defined by 
\begin{equation}
\omega_\al p_r = -(-\al)^rp_r
\end{equation}
for any integer $r$.
We have (see \cite{IGM})
\begin{equation}
\omega_\al J_\beta^\al = \al^{|\la|}J_{\beta'}^{1/ \al}.
\end{equation}
As a consequence 
\begin{equation}
\theta^{\beta}_\la(\al^{-1}) = (-\al)^{\ell(\la)-|\la|}\theta^{\beta'}_\la(\al).
\end{equation}
Furthermore it is clear from the definition that 
\begin{equation}
h_\beta(\al^{-1})h'_\beta(\al^{-1}) =\al^{-2|\beta|}h_{\beta'}(\al)h'_{\beta'}(\al).
\end{equation}
Finally,
\begin{align}
a_{\la^1, \ldots, \la^s}(\al^{-1}) &=  \sum_{\beta \vdash n}\frac{\prod_{i}\theta^\beta_{\la^i}(\alpha^{-1})}{h_\beta(\alpha^{-1})h'_\beta(\alpha^{-1})}\\
&= (-\al)^{2n-sn+\sum_{i}\ell(\la^i)}\sum_{\beta \vdash n}\frac{\prod_{i}\theta^{\beta'}_{\la^i}(\alpha)}{h_{\beta'}(\alpha)h'_{\beta'}(\alpha)}
\end{align}
and the result follows.\end{proof}
\subsection{Jack connection coefficients and Laplace Beltrami operator}
\label{sec : JCCLBO}
For indeterminate $x = (x_1,x_2,\ldots)$ define the {\bf Laplace Beltrami Operator} by
\begin{equation}
D(\al) = \frac{\al}{2}\sum_{i}x_i^2\frac{\partial^2}{\partial x_i^2}+\sum_{i}\sum_{j\neq i}\frac{x_ix_j}{x_i-x_j}\frac{\partial}{\partial x_i}
\end{equation}
In this section we show that the coefficients in the power sum expansion of $D(\al)^r(p_1^n)$ are Jack connection coefficients. We have the theorem:
\begin{thm}\label{thm : ar} Let $a^r_\la(\al)$ be the Jack connection coefficients defined by Equation (\ref{eq : ar}). We have the following equality:
\begin{equation}
\label{eq : arD} a^r_\la(\al) =\frac{1}{\al^nn!} [p_\la]{D(\al)}^r(p_1^n),
\end{equation}
where $[p_\la]{D(\al)}^r(p_1^n)$ denotes the coefficient of $p_\la$ in the power sum expansion of ${D(\al)}^r(p_1^n)$.
\end{thm}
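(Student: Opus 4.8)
The plan is to diagonalize $D(\al)$ in the Jack basis, using the classical fact that the $J_\beta^\al$ are precisely the eigenfunctions of the Laplace--Beltrami operator. The first step is to recall (see \cite{IGM}) the eigenvalue identity
\begin{equation}
D(\al)\,J_\beta^\al = \left(\al\,n(\beta') - n(\beta)\right) J_\beta^\al,
\end{equation}
and then to observe, via the formula $\theta_{[1^{n-2}2^1]}^\la(\al) = \al\,n(\la') - n(\la)$ quoted in Section~\ref{sec : mot}, that the eigenvalue is exactly $\theta_{[1^{n-2}2^1]}^\beta(\al)$. Iterating immediately gives $D(\al)^r J_\beta^\al = \left(\theta_{[1^{n-2}2^1]}^\beta(\al)\right)^r J_\beta^\al$.

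Next I would expand the starting function $p_1^n = p_{[1^n]}$ in the orthogonal Jack basis. Writing $p_1^n = \sum_\beta c_\beta J_\beta^\al$, orthogonality gives $c_\beta = \langle p_1^n, J_\beta^\al\rangle_\al / \langle J_\beta^\al, J_\beta^\al\rangle_\al$. The numerator evaluates to $\theta_{[1^n]}^\beta(\al)\,\al^{n}z_{[1^n]} = \al^{n}n!$, using the expansion $J_\beta^\al = \sum_\mu \theta_\mu^\beta(\al)p_\mu$, the scalar product $\langle p_{[1^n]},p_\mu\rangle_\al = \al^{n}z_{[1^n]}\delta_{[1^n]\mu}$, together with $\theta_{[1^n]}^\beta(\al)=1$ and $z_{[1^n]}=n!$; the denominator is $h_\beta(\al)h'_\beta(\al)$ by Stanley's evaluation of the Jack norm recalled in Section~\ref{sec : cccadca}. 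Hence
\begin{equation}
p_1^n = \al^{n}n!\sum_{\beta \vdash n}\frac{1}{h_\beta(\al)h'_\beta(\al)}\,J_\beta^\al.
\end{equation}

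Applying $D(\al)^r$ term by term and then reading off the coefficient of $p_\la$ through $[p_\la]J_\beta^\al = \theta_\la^\beta(\al)$ yields
\begin{equation}
[p_\la]D(\al)^r(p_1^n) = \al^{n}n!\sum_{\beta \vdash n}\frac{\theta_\la^\beta(\al)\left(\theta_{[1^{n-2}2^1]}^\beta(\al)\right)^r}{h_\beta(\al)h'_\beta(\al)},
\end{equation}
which, after dividing by $\al^{n}n!$, is precisely the closed form for $a^r_\la(\al)$ recorded at the end of Section~\ref{sec : cccadca}. This establishes Equation~(\ref{eq : arD}).

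The only genuinely substantive ingredient is the eigenvalue identity of the first step; once the eigenfunction property and the matching of its eigenvalue with $\theta_{[1^{n-2}2^1]}^\beta(\al)$ are secured, the remainder is a routine diagonalization argument. I expect the main care to lie in bookkeeping the normalization factor $\al^{n}n!$ and in confirming the eigenvalue formula against the exact convention for $D(\al)$ fixed in this section, which can be verified directly on small partitions such as $\beta=[2]$ and $\beta=[1,1]$.
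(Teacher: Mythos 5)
Your proposal is correct and follows essentially the same route as the paper: both diagonalize $D(\al)$ in the Jack basis via the eigenvalue identity $D(\al)J_\beta^\al = \theta_{[1^{n-2}2^1]}^\beta(\al)\,J_\beta^\al$ and use the expansion $p_1^n = \al^n n!\sum_{\beta\vdash n} J_\beta^\al/\bigl(h_\beta(\al)h'_\beta(\al)\bigr)$, which is exactly the paper's Lemma~\ref{lem : jp} and which you rederive directly from Stanley's norm formula together with $\theta_{[1^n]}^\beta(\al)=1$. The only cosmetic difference is the direction of the computation: the paper starts from the generating function $\sum_\la a_\la^r(\al)p_\la$ and collapses it to $D(\al)^r\bigl(p_1^n/(\al^n n!)\bigr)$, whereas you expand $p_1^n$ first and then extract the coefficient of $p_\la$.
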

To prove Theorem \ref{thm : ar}, we use two classical properties of Jack symmetric functions. First, Stanley in \cite{S89} showed that  the scalar product of the $J_\la^\al$ may be written as
\begin{equation}
{\langle J_\la^\al,J_\mu^\al \rangle}_\al = \delta_{\la\mu}h_\la(\al)h'_\la(\al).
\end{equation}
As a consequence, we have the following identity for Jack characters:
\begin{equation}
\sum_{\rho}z_\rho\al^{\ell(\rho)}\theta_\rho^\la(\al)\theta_\rho^\mu(\al) = \delta_{\la\mu}h_\la(\al)h'_\la(\al).
\end{equation}
\noindent Equivalently, 
\begin{equation}
\label{eq : orth}
\sum_{\la}\frac{1}{h_\la(\al)h'_\la(\al)}\theta_\rho^\la(\al)\theta_\sigma^\la(\al) = \frac{\delta_{\rho\sigma}}{z_\rho\al^{\ell(\rho)}}.
\end{equation}
\noindent As a result we have the following lemma:
\begin{lem} The following relation between Jack and power sum symmetric functions holds:
\label{lem : jp}
\begin{equation}
\sum_{\la}\frac{1}{h_\la(\al)h'_\la(\al)}J^\al_\la = \frac{p_1^n}{\al^nn!}.
\end{equation}
\end{lem}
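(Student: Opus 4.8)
The plan is to pass to the power sum basis on the left-hand side, interchange the order of summation, and recognize the resulting inner sum as a special instance of the orthogonality relation (\ref{eq : orth}).

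First I would substitute the defining expansion of the Jack characters, $J^\al_\la = \sum_\mu \theta^\la_\mu(\al) p_\mu$, into the sum and swap the two summations, obtaining
\begin{equation}
\sum_{\la}\frac{1}{h_\la(\al)h'_\la(\al)}J^\al_\la = \sum_{\mu}p_\mu \sum_{\la}\frac{\theta^\la_\mu(\al)}{h_\la(\al)h'_\la(\al)}.
\end{equation}
This is a purely formal step, so the whole content of the lemma is concentrated in the inner coefficient $\sum_{\la}\theta^\la_\mu(\al)/\big(h_\la(\al)h'_\la(\al)\big)$.

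The key idea is to create a second Jack character factor artificially, using the fact (recalled in Section~\ref{sec : mot}) that $\theta^\la_{[1^n]}(\al) = 1$ for every $\la$. Writing $\theta^\la_\mu(\al) = \theta^\la_\mu(\al)\,\theta^\la_{[1^n]}(\al)$ and applying the orthogonality relation (\ref{eq : orth}) with $\rho = \mu$ and $\sigma = [1^n]$ yields
\begin{equation}
\sum_{\la}\frac{\theta^\la_\mu(\al)\,\theta^\la_{[1^n]}(\al)}{h_\la(\al)h'_\la(\al)} = \frac{\delta_{\mu,[1^n]}}{z_{[1^n]}\al^{\ell([1^n])}} = \frac{\delta_{\mu,[1^n]}}{n!\,\al^n},
\end{equation}
where I use $z_{[1^n]} = n!$ and $\ell([1^n]) = n$.

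Substituting this back collapses the outer sum over $\mu$ to the single surviving term $\mu = [1^n]$, giving $p_{[1^n]}/(n!\,\al^n) = p_1^n/(\al^n n!)$, which is exactly the claimed identity. I expect the only real point of care to be the middle step: spotting that multiplying by the constant character $\theta^\la_{[1^n]}(\al) = 1$ turns the bare coefficient into precisely the bilinear form appearing in (\ref{eq : orth}), so that the orthogonality relation can be invoked; the remaining manipulations are routine bookkeeping of the normalizing constants $z_{[1^n]}$ and $\al^{\ell([1^n])}$.
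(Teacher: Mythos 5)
Your proof is correct and is essentially the same as the paper's: both arguments amount to combining the orthogonality relation (\ref{eq : orth}) with the fact that $\theta^\la_{[1^n]}(\al)=1$ and the values $z_{[1^n]}=n!$, $\ell([1^n])=n$. The only difference is direction — the paper multiplies (\ref{eq : orth}) by $p_\sigma$, sums over $\sigma$ to recover $J^\al_\la$, and then sets $\rho=[1^n]$, whereas you expand $J^\al_\la$ in power sums and collapse the inner coefficient via (\ref{eq : orth}); this is the same computation read backwards.
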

\begin{proof} It is a direct consequence from Equation (\ref{eq : orth}). Summing over $p_\sigma$ on both sides reads
\begin{align}
\nonumber \sum_{\la, \sigma}\frac{1}{h_\la(\al)h'_\la(\al)}\theta_\rho^\la(\al)\theta_\sigma^\la(\al)p_\sigma &= \frac{p_\rho}{z_\rho\al^{\ell(\rho)}},\\
\sum_{\la}\frac{1}{h_\la(\al)h'_\la(\al)}\theta_\rho^\la(\al)J_\la^\al &= \frac{p_\rho}{z_\rho\al^{\ell(\rho)}}.
\end{align}
The formula is proved by setting $\rho = [1^n]$. In this case $\theta_{[1^n]}^\la(\al) =1$ and the result follows.
\end{proof}
\noindent Secondly, it is well known that the Jack symmetric functions are eigenfunctions of $D(\al)$:
\begin{equation}
\label{eq : eigen}
D(\al)J_\la^\al = \left(\al n(\la') - n(\la)\right)J_\la^\al = \theta_{[1^{n-2}2^1]}^\la(\al)J_\la^\al.
\end{equation}
Theorem \ref{thm : ar} is a direct consequence of Lemma \ref{lem : jp} and Equation (\ref{eq : eigen}):
\begin{align*}
\sum_{\la \vdash n} a^r_\la(\al)p_\la &= \sum_{\la, \beta \vdash n}\frac{1}{h_\beta(\alpha)h'_\beta(\alpha)}\left(\theta^\beta_{{[1^{n-2}2^1]}}(\alpha)\right)^r\theta^\beta_{\la}(\alpha)p_\la\\
& = \sum_{\beta \vdash n}\frac{1}{h_\beta(\alpha)h'_\beta(\alpha)}\left(\theta^\beta_{{[1^{n-2}2^1]}}(\alpha)\right)^rJ_\beta^\al\\
& = \sum_{\beta \vdash n}\frac{1}{h_\beta(\alpha)h'_\beta(\alpha)}{D(\al)}^r(J_\beta^\al)\\
& = {D(\al)}^r \left (\sum_{\beta \vdash n}\frac{1}{h_\beta(\alpha)h'_\beta(\alpha)}J_\beta^\al \right)\\
& = {D(\al)}^r\left (\frac{p_1^n}{\al^nn!}\right).
\end{align*}
The second equality comes from the definition of $\theta^\beta_{\la}(\alpha)$ as the coefficients in the power sum expansion of the Jack symmetric functions, the third one is $r$ times the application of Equation (\ref{eq : eigen}) and the last one is the application of Lemma \ref{lem : jp}.

\begin{exm} Successive applications of the Laplace Beltrami operator to\linebreak $p_1^n/\al^nn!$ give the values of the $a^r_\la(\al)$'s. For $r=1$ or $2$ and $\la \vdash n$ we get
\begin{align}
{D(\al)}\left (\frac{p_1^n}{\al^nn!}\right) &= \frac{1}{2\al^{n-1}(n-2)!}p_{[1^{n-2}2^1]},\\
\nonumber {D(\al)}^2\left (\frac{p_1^n}{\al^nn!}\right) &=\frac{1}{2\al^{n-1}(n-2)!}p_{[1^n]}+ \frac{(\al-1)}{\al^{n-1}(n-2)!}p_{[1^{n-2}2^1]}\\
&\phantom{lalal}+\frac{1}{4\al^{n-2}(n-4)!}p_{[1^{n-4}2^2]}+\frac{1}{\al^{n-2}(n-3)!}p_{[1^{n-3}3^1]}.
\end{align}
\end{exm}
\subsection{Proof of Theorems \ref{thm : pol} and \ref{thm : polpos}}
In order to show Theorems \ref{thm : pol} and \ref{thm : polpos} we need an additional classical lemma:
\begin{lem}
\label{lem : dp} Operator $D(\al)$ can be expressed as
\begin{equation}
D(\al) = (\al-1)N + \al U + S,
\end{equation}
where for $\la \vdash n$:
\begin{align}
N(p_\la) &=  n(\la')p_\la,\\
\label{eq : defU} U(p_\la) &= \left (\frac{1}{2}\sum_{i,j}ijp_{i+j}\frac{\partial}{\partial p_i}\frac{\partial}{\partial p_j}\right )p_\la,\\
S(p_\la) &= \left (\frac{1}{2}\sum_{i,j}(i+j)p_ip_j\frac{\partial}{\partial p_{i+j}}\right )p_\la.
\end{align}
\end{lem}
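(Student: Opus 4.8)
The plan is to express $D(\al)$, which is given as a differential operator in the indeterminates $x_i$, directly as an operator in the power sums $p_k$ and the derivations $\partial/\partial p_k$. Since the $p_\la$ form a basis of $\Lambda$ and all four operators $D(\al),N,U,S$ are (or are to be) written in terms of multiplication by the $p_k$ and the $\partial/\partial p_k$, it suffices to establish the identity at the level of these polynomial differential operators. The one tool I would use is the chain rule: because $p_k=\sum_i x_i^k$, one has $\partial/\partial x_i=\sum_{k\geq 1}k\,x_i^{k-1}\,\partial/\partial p_k$, which converts every $x_i$-derivative into $p_k$-derivatives.

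First I would treat the second-order summand $D_1=\frac{\al}{2}\sum_i x_i^2\,\partial^2/\partial x_i^2$. Applying the chain rule twice and then collecting $\sum_i x_i^m=p_m$, the cross terms assemble into $\frac{1}{2}\sum_{i,j}ij\,p_{i+j}\,\partial^2/(\partial p_i\,\partial p_j)$, i.e. the operator $U$, while the ``diagonal'' contribution coming from $\partial/\partial x_i$ hitting $x_i^{k-1}$ leaves a first-order piece. Concretely I expect $D_1=\al U+\frac{\al}{2}\sum_k k(k-1)\,p_k\,\partial/\partial p_k$.

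The main obstacle is the first-order summand $D_2=\sum_{i\neq j}\frac{x_ix_j}{x_i-x_j}\,\partial/\partial x_i$, because of the singular denominator $x_i-x_j$, which must cancel to leave honest polynomials. The device is to symmetrize over the two orderings of each unordered pair $\{i,j\}$: after substituting $\partial/\partial x_i=\sum_k k\,x_i^{k-1}\,\partial/\partial p_k$, the coefficient of $\partial/\partial p_k$ is $\sum_{i\neq j}x_i^k x_j/(x_i-x_j)$, and pairing $(i,j)$ with $(j,i)$ gives $x_ix_j\frac{x_i^{k-1}-x_j^{k-1}}{x_i-x_j}=\sum_{b=1}^{k-1}x_i^b x_j^{k-b}$, which is polynomial. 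A second symmetrization, now in the exponent via $b\mapsto k-b$, collapses the double sum into power sums and yields $\sum_{i\neq j}x_i^k x_j/(x_i-x_j)=\frac{1}{2}\big(\sum_{b=1}^{k-1}p_b p_{k-b}-(k-1)p_k\big)$. Substituting back identifies the $p_b p_{k-b}$ part with $S$ and leaves $D_2=S-\frac{1}{2}\sum_k k(k-1)\,p_k\,\partial/\partial p_k$.

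Finally I would add $D_1$ and $D_2$. The two first-order ``diagonal'' pieces combine into $(\al-1)\cdot\frac{1}{2}\sum_k k(k-1)\,p_k\,\partial/\partial p_k$, and it then remains only to identify this operator with $(\al-1)N$. Since $p_k\,\partial/\partial p_k$ acts on $p_\la$ by multiplication by the number $m_k(\la)$ of parts of $\la$ equal to $k$, its eigenvalue is $\frac{1}{2}\sum_k k(k-1)m_k(\la)=\frac{1}{2}\sum_i \la_i(\la_i-1)=\sum_i\binom{\la_i}{2}=n(\la')$, the last equality being the standard identity for $n(\la')$. This gives $D(\al)=(\al-1)N+\al U+S$, as claimed. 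The only genuinely delicate point is the pair-symmetrization that removes the denominator in $D_2$; everything else is bookkeeping with the chain rule.
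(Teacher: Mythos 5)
Your proof is correct; all three computations check out. The chain rule $\partial/\partial x_i=\sum_k k x_i^{k-1}\,\partial/\partial p_k$ does give
$\frac{\al}{2}\sum_i x_i^2\,\partial^2/\partial x_i^2=\al U+\frac{\al}{2}\sum_k k(k-1)\,p_k\,\partial/\partial p_k$;
the pair-symmetrization $\bigl(x_i^kx_j-x_j^kx_i\bigr)/(x_i-x_j)=\sum_{b=1}^{k-1}x_i^bx_j^{k-b}$ legitimately removes the singular denominator, and after subtracting the $i=j$ diagonal one gets
$\sum_{i\neq j}x_i^kx_j/(x_i-x_j)=\frac{1}{2}\bigl(\sum_{b=1}^{k-1}p_bp_{k-b}-(k-1)p_k\bigr)$,
so the first-order summand of $D(\al)$ equals $S-\frac{1}{2}\sum_k k(k-1)\,p_k\,\partial/\partial p_k$; and the leftover operator $\frac{\al-1}{2}\sum_k k(k-1)\,p_k\,\partial/\partial p_k$ acts on $p_\la$ with eigenvalue $\sum_i\binom{\la_i}{2}=n(\la')$, which is exactly $(\al-1)N$. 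The main point of comparison is that the paper offers no proof of this statement at all: it is invoked as ``an additional classical lemma,'' the decomposition being standard in the Jack-polynomial literature (it is the power-sum form of the Laplace--Beltrami operator appearing in Stanley's and Macdonald's treatments). So your argument is not an alternative route to the paper's; it is a correct, self-contained substitute for the citation, and it is the standard derivation. The only things worth making explicit if this were written out in full are the bookkeeping conventions: that $\partial/\partial p_k$ is taken with the $p_k$ treated as algebraically independent (so the chain-rule identity is an identity of operators on $\Lambda$), and that the sum over $i\neq j$ in $D(\al)$ is over ordered pairs, which is where the factor $\frac{1}{2}$ in your formula for $\sum_{i\neq j}x_i^kx_j/(x_i-x_j)$ comes from.
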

The following property holds:
\begin{prop}
For any $\la \vdash n$, the coefficients in the power sum expansion of $N(p_\la)$, $U(p_\la)$ and $S(p_\la)$ are non negative integers.
\end{prop}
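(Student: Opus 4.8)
The plan is to prove all three statements by a single direct computation: expand $p_\la = \prod_i p_i^{m_i(\la)}$ and apply each operator explicitly, then read off the coefficients of the resulting power sums. The operator $N$ is immediate, since $n(\la') = \sum_i \binom{\la_i}{2}$ is manifestly a non-negative integer, so $N(p_\la) = n(\la')p_\la$ has a single non-negative integer coefficient.

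For $U$, I would apply $\frac{\partial}{\partial p_i}\frac{\partial}{\partial p_j}$ to $p_\la$ and split the double sum into its diagonal and off-diagonal parts. Writing $m_k = m_k(\la)$, the off-diagonal term indexed by a pair with $i\neq j$ produces $\frac{\partial}{\partial p_i}\frac{\partial}{\partial p_j}p_\la = m_i m_j\, p_i^{m_i-1}p_j^{m_j-1}\prod_{k\neq i,j}p_k^{m_k}$; multiplying by $ij\,p_{i+j}$ gives the power sum $p_\mu$ (where $\mu$ is $\la$ with one part $i$ and one part $j$ replaced by a single part $i+j$) with coefficient $ij\,m_i m_j$, and since the ordered pairs $(i,j)$ and $(j,i)$ both contribute this same monomial, the overall factor $\tfrac12$ leaves the integer $ij\,m_im_j\geq 0$. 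The diagonal term $i=j$ yields $i^2\,m_i(m_i-1)$, which after the factor $\tfrac12$ is $\tfrac12 i^2 m_i(m_i-1)$; this is a non-negative integer precisely because $m_i(m_i-1)$ is a product of two consecutive integers and hence even.

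For $S$, I would proceed dually, applying $\frac{\partial}{\partial p_{i+j}}$, which splits a part $k=i+j$ into the two parts $i$ and $j$. Fixing the part $k$ being split, the contributions with $i\neq j$ again come in pairs $(i,j),(j,i)$ producing the same $p_\mu$, so the $\tfrac12$ yields the integer $k\,m_k\geq 0$; the balanced split $i=j=k/2$, possible only when $k$ is even, contributes $\tfrac12 k\,m_k = (k/2)\,m_k$, again a non-negative integer.

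The computations are routine; the only genuinely delicate point — and the step I expect to be the main obstacle in writing it cleanly — is the parity bookkeeping in the diagonal and balanced terms, where the prefactor $\tfrac12$ must be absorbed. This works because in $U$ the multiplicity $m_i(m_i-1)$ is even, while in $S$ the split part $k=2i$ is even, so in both cases a factor of $2$ is available to cancel the $\tfrac12$. One should also check that distinct pairs (respectively distinct splittings) contribute to distinct partitions $\mu$, or, where they coincide, that their non-negative coefficients simply add, which preserves the claim.
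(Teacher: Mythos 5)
Your proposal is correct and follows essentially the same route as the paper's proof: handle $N$ via the integrality of $n(\la')$, and for $U$ and $S$ split the double sum into off-diagonal terms (where the symmetry in $(i,j)$ absorbs the factor $\tfrac12$) and diagonal terms (where evenness of $m_i(m_i-1)$ for $U$, and of the split part $k=2i$ for $S$, absorbs it). Your version simply spells out the explicit coefficients $ij\,m_im_j$, $\tfrac12 i^2m_i(m_i-1)$, $km_k$, and $(k/2)m_k$, which the paper leaves implicit.
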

\begin{proof}:
\begin{itemize}
\item[(i)] From Equation (\ref{eq : n}), it is obvious that the quantity $n(\la')$ is a non negative integer so $[p_\la]N(p_\la)$ is a non negative integer.
\item[(ii)] The expression of $U(p_\la)$ is symmetric in $i$ and $j$ so the coefficient $1/2$ vanishes by adding the terms for indices $(i,j)$ and $(j,i)$ when $i\neq j$. When $i=j$, assume $m_i(\la) = k \geq 2$. Then $$\frac{\partial^2}{\partial p_i^2}p_\la = k(k-1)p_\la/p_i^{2}.$$ As $k(k-1)/2$ is a non negative integer, the coefficients in the power sum expansion of $U(p_\la)$ are always non negative integers.
\item[(iii)] Regarding operator $S$ the property is proved by noticing that for $i=j$, $(i+j)/2 = i$ is a non negative integer.
\end{itemize}
\end{proof}
Using Theorem \ref{thm : ar} we may rewrite Jack connection coefficients $a_\la^r(\al)$ as:
\begin{equation}
a_\la^r(\al) = \frac{1}{\al^nn!}{\left(\al (N+U) + (S-N)\right)}^r(p_1^n)
\end{equation}
As a result, $n!\al^na_\la^r(\al)$ is a polynomial in $\al$ of degree at most $r$ with integer coefficients. For $0\leq i \leq r$, there exist integers $h_{\la,r}^i$ such that 
\begin{equation}
a_\la^r(\al) = \frac{1}{\al^nn!}\sum_{0\leq i \leq r}h_{\la,r}^i\alpha^i.
\end{equation}
But from Theorem \ref{thm : inv}
\begin{equation}
a_\la^r(\al^{-1}) = (-\al)^{(1-r)n+r(n-1)+\ell(\la)}a_\la^r(\al) = (-\al)^{n-r+\ell(\la)}a_\la^r(\al).
\end{equation}
Recall the definition $r_\la = n-\ell(\la)$. As a consequence to the previous equation
\begin{align}
\al^n\sum_{0\leq i \leq r}h_{\la,r}^i\alpha^{-i} &= (-\al)^{n-r+\ell(\la)}\frac{1}{\al^n}\sum_{0\leq i \leq r}h_{\la,r}^i\alpha^i,\\
\sum_{0\leq i \leq r}h_{\la,r}^i\alpha^{n-i} &= (-1)^{r-r_\la}\sum_{0\leq i \leq r}h_{\la,r}^i\alpha^{i-r+\ell(\la)}.
\end{align}
Equating the coefficients in the powers of $\alpha$ in both sides of the equation finishes the proof of Theorem \ref{thm : pol}. We get
\begin{equation}
\label{eq : aaa}h_{\la,r}^{r+r_\la-i} = (-1)^{r-r_\la}h_{\la,r}^i.
\end{equation}
Using the previous equation, as the summation parameter $i$ is less or equal to $r$:
\begin{equation}
h_{\la,r}^j = 0 \mbox{ if } j<r_\la.
\end{equation}
Finally the following expressions hold
\begin{align}
a_\la^r(\al) &= \frac{1}{\al^nn!}\sum_{r_\la \leq i \leq r}h_{\la,r}^i\alpha^i\\
\label{eq : bbb}&= \frac{1}{n!}\sum_{0 \leq i \leq r-r_\la}h_{\la,r}^{i+r_\la}\alpha^{i-\ell(\la)}.
\end{align}
Equations (\ref{eq : aaa}) and (\ref{eq : bbb}) prove Theorem \ref{thm : pol}.\\
The proof of Theorem \ref{thm : polpos} is a direct consequence of
\begin{equation}
a_\la^r(\al) = \frac{1}{\al^nn!}{\left((\al-1) (N+U) + (S+U)\right)}^r(p_1^n)
\end{equation}
and the previous remarks in this section regarding the polynomial properties of $a_\la^r(\al)$ in $\al$.
\subsection{Proof of Theorems \ref{thm : zero} an \ref{thm : denes}}
We study further the combinatorial properties of the Laplace-Beltrami operator to prove Theorems \ref{thm : zero} an \ref{thm : denes}.\\
The power sum expansions of the actions of operators $N$, $U$ and $S$ on a given $p_\la$ have the following properties:
\begin{itemize}
\item $\ell(\mu) \neq \ell(\la) \Rightarrow [p_\mu]N(p_\la) = 0$
\item $\ell(\mu) \neq \ell(\la)-1 \Rightarrow [p_\mu]U(p_\la) = 0$
\item $\ell(\mu) \neq \ell(\la)+1 \Rightarrow [p_\mu]S(p_\la) = 0$
\end{itemize}
As a direct consequence, we have the lemma:
\begin{lem}
\label{lem : U}
For any integer partition $\la$ of $n$ with $\ell(\la) = n-r$ parts, the coefficient of $p_\la$ of ${D(\al)}^r(p_1^n)$ verifies
\begin{equation}
[p_\la]{D(\al)}^r(p_1^n) = \al^r[p_\la]U^r(p_1^n)
\end{equation}
Furthermore, if $\ell(\la) < n-r$, then $[p_\la]{D(\al)}^r(p_1^n) = 0$.
\end{lem}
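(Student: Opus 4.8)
The plan is to exploit the grading of the space of symmetric functions of degree $n$ by the number of parts, together with the decomposition $D(\al) = (\al-1)N + \al U + S$ from Lemma~\ref{lem : dp}. Writing $\Lambda_n^{(k)}$ for the span of those $p_\mu$ with $\mu \vdash n$ and $\ell(\mu)=k$, the three bullet properties stated just before the lemma say precisely that $N$ preserves the grade, $U$ lowers it by one, and $S$ raises it by one, that is $N(\Lambda_n^{(k)}) \subseteq \Lambda_n^{(k)}$, $U(\Lambda_n^{(k)}) \subseteq \Lambda_n^{(k-1)}$ and $S(\Lambda_n^{(k)}) \subseteq \Lambda_n^{(k+1)}$. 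The starting function $p_1^n = p_{[1^n]}$ lies in the top grade $\Lambda_n^{(n)}$.

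First I would expand $D(\al)^r = ((\al-1)N + \al U + S)^r$ as a sum over the $3^r$ words in which each of the $r$ letters is one of $(\al-1)N$, $\al U$, $S$. Applying such a word to $p_1^n$ and reading off the resulting grade, each occurrence of $N$ leaves the grade unchanged, each $U$ decreases it by one, and each $S$ increases it by one; so if a word uses $a$ copies of $N$, $b$ copies of $U$ and $c$ copies of $S$ (with $a+b+c=r$), its image lies in grade $n - b + c$. Hence such a word contributes to the coefficient of $p_\la$, where $\ell(\la) = n-r$, only if $n - b + c = n - r$, i.e. $b - c = r$.

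The key (and only nontrivial) step is the rigidity this forces. Combining $b - c = r$ with $a + b + c = r$ yields $a + 2c = 0$, so $a = c = 0$ and $b = r$. Thus the single surviving word is $(\al U)^r$, and I would conclude
\begin{equation}
[p_\la]D(\al)^r(p_1^n) = [p_\la](\al U)^r(p_1^n) = \al^r [p_\la]U^r(p_1^n).
\end{equation}
For the final assertion, if $\ell(\la) < n-r$ then a contributing word would need $b - c > r$; but $b \le a+b+c = r$ forces $b - c \le b \le r$, a contradiction, so every word lands in grade at least $n-r$ and the coefficient vanishes.

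The argument is essentially bookkeeping once the length grading is in place, so I expect no serious obstacle; the only point needing care is the confirmation that the three operators shift the number of parts exactly as the bullet list asserts. This follows by inspecting the formulas in Lemma~\ref{lem : dp}: $N$ acts by the scalar $n(\la')$ and so preserves $p_\la$, the double derivative in $U$ removes two parts while the factor $p_{i+j}$ restores one, and the double multiplication in $S$ adds two parts while $\partial/\partial p_{i+j}$ removes one.
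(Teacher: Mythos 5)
Your proof is correct and follows essentially the same route as the paper: the paper derives the lemma as a direct consequence of the three length-shifting properties of $N$, $U$, $S$ from the decomposition $D(\al) = (\al-1)N + \al U + S$, which is exactly the grading argument you spell out. Your word-expansion bookkeeping (forcing $a = c = 0$, $b = r$) is just a more explicit write-up of the step the paper leaves implicit.
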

The final remark of Lemma \ref{lem : U} proves Theorem \ref{thm : zero}. In order to prove Theorem~\ref{thm : denes}, it is sufficient to study operator $U$. Looking more precisely at the action of $U$ on a given power sum symmetric function $p_\la$, one can see that the terms $[p_\mu]U(p_\la)$ are non zero only when $\mu$ is of the form $\mu = (\la_1, \ldots, \la_{i-1}, \la_{i}+\la_{j}, \ldots, \la_{j-1},\la_{j+1},\ldots)$, i.e. when $\mu$ is obtained by combining (adding) two parts of $\la$. The contributions of the combinations of $\la_i$ and $\la_j$ to $[p_\mu]U(p_\la)$ are additive and equal to $\la_i\la_j$. The sum of all the contributions $\la_i\la_jp_\mu$ for all the possible $\mu$ combining two parts of $\la$ gives $U(p_\la)$. As an example, there are $n(n-1)/2$ ways of combining two parts of the partition $[1^n]$. Any of these combinations yields the partition $[1^{n-2}2^1]$ and in this particular case $\la_i\la_j = 1$. We have
\begin{equation}
U(p_1^n) = \frac{n(n-1)}{2}p_{[1^{n-2}2^1]}.
\end{equation}
Further, there are $(n-2)(n-3)/2$ ways of combining two $1$'s in $[1^{n-2}2^1]$ to get the partition $[1^{n-4}2^2]$ ($\la_i\la_j=1$) and $n-2$ ways of combining one $1$ and the part $2$ to get the partition $[1^{n-3}3^1]$ ($\la_i\la_j = 2$). It follows that
\begin{equation}
U(p_{[1^{n-2}2^1]}) = \frac{(n-2)(n-3)}{2}p_{[1^{n-4}2^2]} + 2(n-2)p_{[1^{n-3}3^1]}.
\end{equation}
Iterating $n-1$ times operator $U$ on $p_1^n$ yields a non zero coefficient only in $p_n$ (equal to $\al n!a^{n-1}_{[n^1]}$). This coefficient is the sum of the contributions of all the possible successive combinations of the parts of $[1^n]$. Figure \ref{fig : comb} shows how these various ways of combination contribute to the final coefficient in $p_n$ for $n=5$ ($U^4(p_{[1^5]}) = 3000p_{[5^1]}$).
\begin{figure}[htbp]
  \begin{center}
    \includegraphics[width=0.75\textwidth]{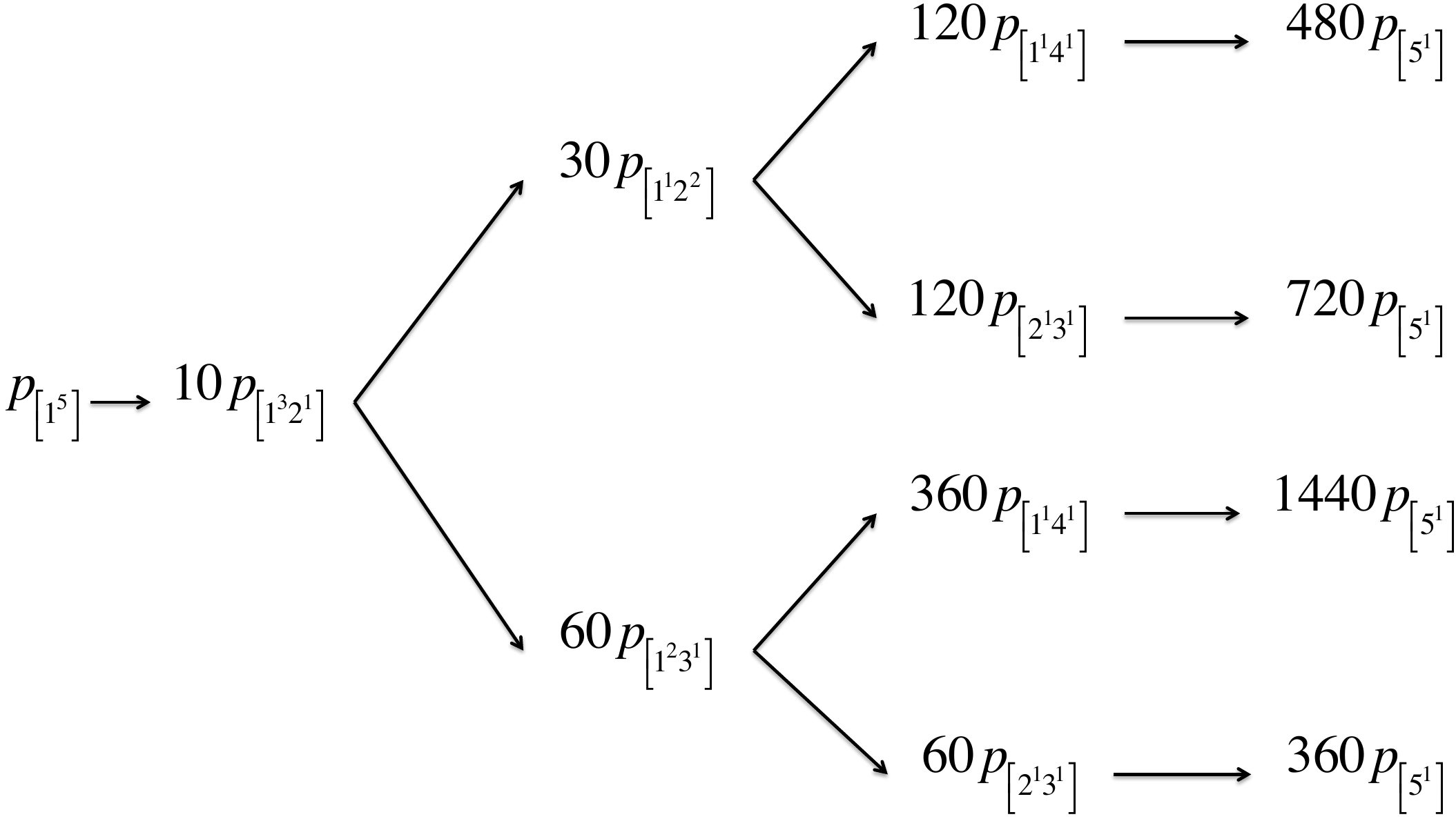}
    \caption{Illustration of operator U's action.}
    \label{fig : comb}
  \end{center}
\end{figure}
\begin{lem} \label{lem : recU} Let $i$ and $j$  be two integers greater or equal to 1. The following recursion holds for the coefficients in the power sum expansion of $U^r(p_1^n)$:
\begin{align}
\nonumber [p_{(i,j)}]U^{i+j-2}&(p_{[1^{i+j}]})=\\
&\frac{1}{2^{\delta_{ij}}}\binom{i+j}{i}\binom{i+j-2}{i-1}\left ([p_{i}]U^{i-1}(p_{[1^{i}]})\right ) \left ([p_{j}]U^{j-1}(p_{[1^{j}]})\right ) 
\end{align}
\end{lem}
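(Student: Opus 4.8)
The plan is to recast the iterated action of $U$ on $p_{[1^n]}$ as a weighted enumeration of \emph{merge sequences} on labeled blocks, and then to prove the recursion by splitting such a sequence into the part that builds the block of size $i$ and the part that builds the block of size $j$.

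First I would set up the combinatorial model. Working with set partitions of $\{1,\dots,n\}$ rather than with partitions of the integer $n$, define an operator $\tilde U$ sending a set partition $\pi$ to $\sum_{\{B,B'\}}|B|\,|B'|\,\pi_{B,B'}$, the sum being over unordered pairs of blocks and $\pi_{B,B'}$ denoting $\pi$ with $B$ and $B'$ merged. Sending each set partition to the power sum indexed by its block sizes intertwines $\tilde U$ with $U$, because the combinatorial description of $U$ recalled above (combining two parts $\la_i,\la_j$ with additive contribution $\la_i\la_j$) is exactly $\tilde U$ read off on part sizes. Consequently, for $m=n-\ell(\mu)$,
\begin{equation}
[p_\mu]U^{m}(p_{[1^n]}) = \sum_{\sigma} w(\sigma),
\end{equation}
where $\sigma$ runs over ordered sequences of $m$ pairwise merges starting from the $n$ singletons $\{1\},\dots,\{n\}$ and ending with block sizes $\mu$, and $w(\sigma)$ is the product over all merge steps of $|B|\,|B'|$. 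In particular $[p_i]U^{i-1}(p_{[1^i]})$ is the total weight of merge sequences on $i$ labeled singletons collapsing to a single block.

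Next I would analyze a merge sequence $\sigma$ counted by the left-hand side, i.e.\ one on $\{1,\dots,i+j\}$ of length $i+j-2$ ending with two blocks of sizes $i$ and $j$. Its two terminal blocks form an unordered partition $\{A,B\}$ with $|A|=i$, $|B|=j$, and no step of $\sigma$ ever merges a block contained in $A$ with a block contained in $B$ (such a merge would create a block meeting both $A$ and $B$, impossible in the final configuration). Hence $\sigma$ restricts to a merge sequence $\sigma_A$ on the $i$ elements of $A$ collapsing them to a single block ($i-1$ steps) and a merge sequence $\sigma_B$ on $B$ ($j-1$ steps), and $\sigma$ is recovered from the triple $(\{A,B\},\sigma_A,\sigma_B)$ together with a choice of how to interleave the $i-1$ steps of $\sigma_A$ with the $j-1$ steps of $\sigma_B$ in time.

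Finally I would assemble the count. The number of ordered interleavings preserving the internal orders of $\sigma_A$ and $\sigma_B$ is $\binom{i+j-2}{i-1}$, and crucially the weight factors as $w(\sigma)=w(\sigma_A)\,w(\sigma_B)$ independently of the interleaving, since each merge weight depends only on the sizes of the two blocks being joined and these evolve separately on the $A$-side and the $B$-side. Summing $w(\sigma_A)$ over all collapsing sequences on $A$ gives $[p_i]U^{i-1}(p_{[1^i]})$ (the weight is size-dependent only, so relabeling $A$ as $\{1,\dots,i\}$ is harmless), and similarly for $B$. The number of unordered terminal partitions $\{A,B\}$ with the prescribed sizes is $\binom{i+j}{i}$ when $i\neq j$ and $\tfrac12\binom{2i}{i}$ when $i=j$, i.e.\ $2^{-\delta_{ij}}\binom{i+j}{i}$; this single factor is the only place the Kronecker symbol enters. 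Multiplying these contributions yields the claimed identity. The main obstacle is the bookkeeping of this last paragraph — establishing that the weight multiplies across an arbitrary shuffle and that the unordered-pair symmetry at $i=j$ produces exactly the factor $2^{-\delta_{ij}}$ — while the intertwining that legitimizes the merge-sequence model in the first paragraph is the conceptual crux.
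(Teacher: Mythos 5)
Your proof is correct and follows essentially the same route as the paper's: the paper likewise interprets $[p_{(i,j)}]U^{i+j-2}(p_{[1^{i+j}]})$ as a weighted count of sequences of part-combinations, splitting each sequence according to which initial $1$-parts end up in the part of size $i$ (the factor $\binom{i+j}{i}$) and which of the $i+j-2$ iterations are used on each side (the factor $\binom{i+j-2}{i-1}$), with division by $2$ when $i=j$ to avoid double-counting. Your labeled set-partition/merge-sequence model and the explicit intertwining of $\tilde U$ with $U$ simply make rigorous what the paper asserts informally.
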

\begin{proof}
As noticed in the previous paragraph, the application of operator $U^r$ on $p_{[1^{i+j}]}$ provides non zero coefficients in the $p_\la$'s where partition $\la$ is obtained by $r$ successive combinations of parts starting with $[1^{i+j}]$. At each iteration two parts are combined and the resulting coefficient depends only on the chosen parts (and not the whole partition at this step). As a result the coefficient obtained by $i+j-2$ combinations of parts to get partition $(i,j)$ from $[1^{i+j}]$ is equal to the product of the coefficients obtained by $k-1$ combinations of parts to get $(k)$ from $[1^{k}]$ for $k=i,j$ multiplied by:
\begin{itemize}
\item the number of ways of selecting the initial $1$-parts that will be combined to get $(i)$ (there are $\binom{i+j}{i}$ such ways)
\item the number of ways of selecting the $i-1$ (among $i+j-2$) iterations used to combine parts in order to get $(i)$ (there are $\binom{i+j-2}{i-1}$ such ways, all yielding obviously the same contribution).
\end{itemize}
When $i=j$ this total number is divided by $2$ not to count twice identical scenarios.   
\end{proof}
Combining the formula of Lemma \ref{lem : recU} with Equations (\ref{eq : arD}) and (\ref{eq : defU}) implies the following recursive formula on the number $a_{(n)}^{n-1}$:
\begin{equation}
\al n! a_{(n)}^{n-1}(\al) = \frac{1}{2}\sum_{i=1}^{n-1}i(n-i)\binom{n}{i}\binom{n-2}{i-1}\left (\al i!a_{(i)}^{i-1}(\al)\right)\left(\al (n-i)!a_{(n-i)}^{n-i-1}(\al)\right)
\end{equation}
which simplifies as
\begin{equation}\label{eq : recA}
\al n a_{(n)}^{n-1}(\al) = \frac{n}{2}\sum_{i=1}^{n-1}\binom{n-2}{i-1}\left (\al ia_{(i)}^{i-1}(\al)\right)\left(\al (n-i)a_{(n-i)}^{n-i-1}(\al)\right)
\end{equation}
Denote $t_n = \al n a_{(n)}^{n-1}(\al)$. As shown in \cite{Sh}, one can solve the above recursion by considering the generating function
\begin{equation}G(u) = \sum_{n \geq 1}\frac{t_n}{(n-1)!}u^n.\end{equation}
Using Equation (\ref{eq : recA}), one gets
\begin{equation}
\frac{1}{2}\frac{dG^2}{du}(u) = \frac{dG}{du}(u) - \frac{G(u)}{u}
\end{equation}
which gives
\begin{equation}
\frac{dG}{du} = \frac{d\ln(G)}{du} - \frac{d\ln}{du}.
\end{equation}
Using the initial conditions $G(0) = 0$ and $\frac{G(u)}{u}\mid_{u=0}=t_1=1$, one obtains the functional equation
\begin{equation} 
G(u) = u\exp(G(u)),
\end{equation}
and necessarily
\begin{equation}
G(u)=\sum_{n \geq 1}\frac{n^{n-2}}{(n-1)!}u^n.
\end{equation}
The proof of Theorem \ref{thm : denes} is completed by noticing that Lemma \ref{lem : recU} can be easily generalized as
\begin{equation}
Aut_\la[p_{\la}]U^{|\la|-\ell(\la)}(p_{[1^{|\la|}]}) =\binom{|\la|}{\la}\binom{|\la|-\ell(\la)}{\la-1}\prod_{i}\left ([p_{\la_i}]U^{\la_i-1}(p_{[1^{\la_i}]})\right )
\end{equation}
where $\la-1$ is the partition $(\la_1-1,\la_2-1,\ldots)$ of $|\la|-\ell(\la)$. We have:
\begin{align*}
\nonumber Aut_\la[p_{\la}]U^{|\la|-\ell(\la)}\left (\frac{p_{[1^{|\la|}]}}{\al^{|\la|}|\la|!}\right) &=\frac{(|\la|-\ell(\la))!}{\prod_i{(\la_i-1)!}}\prod_{i}\left ([p_{\la_i}]U^{\la_i-1}\left (\frac{p_{[1^{\la_i}]}}{\al^{\la_i}\la_i!}\right)\right )\\
\nonumber Aut_\la a^{|\la|-\ell(\la)}_{\la}(\al) &=\frac{(|\la|-\ell(\la))!}{\prod_i{(\la_i-1)!}}\prod_{i}a_{(\la_i)}^{\la_i-1}(\al)\\
Aut_\la a^{|\la|-\ell(\la)}_{\la}(\al) &=\frac{(|\la|-\ell(\la))!}{\prod_i{(\la_i-1)!}}\prod_{i}\frac{1}{\al \la_i}\la_i^{\la_i-2}
\end{align*}
\section{Further remarks}
\label{sec : FR}
\subsection{A generalization of Jackson's formula}
In this section we look at a less refined definition of Jack connection coefficients and focus only on the number of parts of the $\la^i$ . We define
\begin{equation}
\label{eq : pi}
 a_{n,p_1,p_2,\ldots,p_s}(\al) = \sum_{\la^i \vdash n,\; \ell(\la^i)=p_i} a_{\la^1,\la^2,\ldots,\la^s}(\al).
\end{equation}
Stanley \cite{S89} evaluated Jack symmetric functions at $x = I_k = (1,1,\ldots, 1,0,\ldots)$ ($k$ $1$'s) and showed  the classical formula
\begin{equation}
J^\al_\la(I_k) = R^\al_\la(k) := \prod_{s \in \la}(k+\al a'(s)-l'(s)).
\end{equation}
But as $p_\mu(I_k) = k^{\ell(\mu)}$ we have the following formula:
\begin{equation}
 \sum_{\mu \vdash n}\theta_\mu^\la k^{\ell(\mu)} = R^\al_\la(k). 
\end{equation}
Being true for any integer $k$, this polynomial identity  is actually true for any scalar $X$:
\begin{equation}
\label{eq : X}
 \sum_{\mu \vdash n}\theta_\mu^\la(\al) X^{\ell(\mu)} = R^\al_\la(X) = \prod_{s \in \la}(X+\al a'(s)-l'(s)). 
\end{equation}
As a result we get the following explicit general formulation:
\begin{thm}
\label{thm : genJackson}
 Let $a_{n, p_1,p_2,\ldots,p_s}(\al)$ be the coefficients defined in equation (\ref{eq : pi}) and $X_i$ ($1\leq i \leq s$) $s$ scalar indeterminate. We have the following formula for any integer $r \geq 1$:
\begin{equation}
\sum_{p_1,p_2,\ldots,p_s\geq 1} a_{n, p_1,p_2,\ldots,p_s}(\al)\prod_{1\leq i \leq s}X_i^{p_i} = \sum_{\beta \vdash n}\frac{1}{h_\beta(\alpha)h'_\beta(\alpha)}\prod_{1\leq i \leq s}R^\al_\beta(X_i)
\end{equation}
\end{thm}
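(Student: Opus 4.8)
The plan is to unwind the two nested definitions and then collapse the resulting multiple sum using Stanley's evaluation formula, Equation~(\ref{eq : X}). First I would substitute the definition~(\ref{eq : pi}) of $a_{n,p_1,\ldots,p_s}(\al)$ together with the expression $a_{\la^1,\ldots,\la^s}(\al)=\sum_{\beta\vdash n}\frac{1}{h_\beta(\al)h'_\beta(\al)}\prod_i\theta^\beta_{\la^i}(\al)$ into the left-hand side. The key bookkeeping observation is that letting $p_i$ range over all integers $\geq 1$ and, for each $i$, letting $\la^i$ range over partitions of $n$ with exactly $p_i$ parts amounts to the same thing as letting each $\la^i$ range freely over all partitions of $n$, provided one replaces the monomial $X_i^{p_i}$ by $X_i^{\ell(\la^i)}$. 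This rewrites the left-hand side as $\sum_{\la^1,\ldots,\la^s\vdash n}\sum_{\beta\vdash n}\frac{1}{h_\beta(\al)h'_\beta(\al)}\prod_i\theta^\beta_{\la^i}(\al)X_i^{\ell(\la^i)}$.

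Next I would interchange the (finite) order of summation to bring the sum over $\beta$ to the outside. Since the summand is a product over the index $i$ of factors each depending only on the single partition $\la^i$, the combined sum over the tuple $(\la^1,\ldots,\la^s)$ factorises into a product of $s$ independent single sums, yielding $\sum_{\beta\vdash n}\frac{1}{h_\beta(\al)h'_\beta(\al)}\prod_i\bigl(\sum_{\la^i\vdash n}\theta^\beta_{\la^i}(\al)X_i^{\ell(\la^i)}\bigr)$. Each inner sum is precisely the left-hand side of Equation~(\ref{eq : X}) with the Jack index $\beta$ and evaluation variable $X_i$, and therefore equals $R^\al_\beta(X_i)$. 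Substituting these evaluations produces the claimed right-hand side and completes the argument.

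There is no serious obstacle here: every sum ranges over finite index sets, so the interchange of summations and the factorisation of the tuple-sum are formally justified, and the only substantive input is Stanley's identity~(\ref{eq : X}). The one point requiring mild care is the index translation in the first step, namely checking that the constraint $\ell(\la^i)=p_i$ combined with the free outer sum over $p_i$ exactly reconstitutes the unconstrained sum over all $\la^i\vdash n$ while converting $X_i^{p_i}$ into $X_i^{\ell(\la^i)}$. This is a routine re-indexing rather than a genuine difficulty, so I expect the proof to be short and essentially computational.
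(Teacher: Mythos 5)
Your proof is correct and follows exactly the route the paper intends: the paper derives Stanley's polynomial identity (Equation~(\ref{eq : X})) and then states the theorem as an immediate consequence, leaving implicit precisely the re-indexing, sum interchange, and factorisation steps that you spell out. The only substantive input in both cases is Equation~(\ref{eq : X}), so your write-up is a faithful (and slightly more detailed) version of the paper's argument.
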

Theorem \ref{thm : genJackson} is a generalization of the main formula in \cite{DMJ}.
\begin{rem} 
In the case $\al = 1$ Jackson used this formula to compute explicitly $c^{(n)}_{\lambda^2,\ldots,\la^s}$ with $\la^i \in \{[n^1],[1^{n-2},2^1]\}$ for $2 \leq i \leq s$. In this particular case
\begin{align}
\theta_{[n^1]}^\la(1) &= \delta_{\la, [1^{n-t},t^1]}(-1)^{n-t-1}(n-t-1)!(t-1)!,\\
\theta_{[1^{n-2}2^1]}^{[1^{n-t},t^1]} &= -\frac{n(n-2t-1)}{2},\\
h_{[1^{n-t},t^1]}(1) &= (n-1)(t-1)!(n-t-1)!.
\end{align}
These very simple expressions allow a computation of the close form formula (\ref{eq : J}). When $\al \neq 1$ the formula for the Jack connection coefficients does not simplify.   
\end{rem}
\subsection{Differential Equations}
Consider the exponential generating function
\begin{equation}
F(\al,x,t) = \sum_{\la \vdash n, \; r\geq 0}a_\la^r(\al)p_\la(x)\frac{t^r}{r!}.
\end{equation} 
Using Theorem \ref{thm : ar}, one immediately gets
\begin{align}
 F(\al,x,t) = \sum_{r\geq 0}\left[ \frac{(tD(\al))^r}{r!}\left (\frac{p_1^n}{\al^nn!} \right )\right](x) = \left[\exp\left({tD(\al)}\right)\left (\frac{p_1^n}{\al^nn!} \right )\right](x)
\end{align}
As a consequence we have the following differential equation:
\begin{equation}
\frac{\partial F}{\partial t} - D(\al)F = 0.
\end{equation}
This equation is a general form of the differential equations studied by Goulden and Jackson for the generating series of the number of minimal transitive factorizations of a permutation into transpositions in \cite{GJ97}. 
\subsection{Unicellular hypermaps}
Using a similar development as in Section \ref{sec : PT} one can compute the coefficient $a_{\la,\mu,[1^{n-2}2^1]}(\al)$. We have
\begin{equation}
a_{\la,\mu,[1^{n-2}2^1]}(\al) = \frac{1}{\al^{\ell(\la)}z_\la}[p_\mu] D(\al)(p_\la).
\end{equation}
As a consequence $$a_{\la,\la,[1^{n-2}2^1]}(\al) = (\al-1)\al^{-\ell(\la)}z_\la^{-1}n(\la').$$
If $\mu = (\la_1, \ldots, \la_{i-1}, \la_{i}+\la_{j}, \ldots, \la_{j-1},\la_{j+1},\ldots)$ then $$a_{\la,\mu,[1^{n-2}2^1]}(\al) = \frac{\la_i\la_j}{2^{\delta_{i,j}}}\al^{-\ell(\la)+1}z_\la^{-1}.$$
If $\mu = (\la_1, \ldots, \la_{i-1}, \la_{i}-k,k,\ldots)$ then $$a_{\la,\mu,[1^{n-2}2^1]}(\al) = \frac{\la_i}{2^{\delta_{\la_i,2k}}}\al^{-\ell(\la)}z_\la^{-1}.$$
Unicellular hypermaps embedded in orientable (resp. locally orientable) surfaces are counted by $c^{(n)}_{\la,\mu}$ (resp. ${|B_n|^{-1}}b^{(n)}_{\la,\mu}$). Using the equation above and Lemma \ref{lem : sym}, we get:
\begin{align}
&c^{(n)}_{(n),[1^{n-2}2^1]} = 0, \;\; \frac{1}{|B_n|}b^{(n)}_{(n),[1^{n-2}2^1]} = \binom{n}{2},\\
&c^{(n)}_{(n-i,i),[1^{n-2}2^1]} = n/2^{\delta_{n,2i}},\;\;\frac{1}{|B_n|}b^{(n)}_{(n-i,i),[1^{n-2}2^1]} = n/2^{\delta_{n,2i}}
\end{align}
  


\bibliographystyle{alpha}

\end{document}